\newtheorem{lemma}{Lemma}[section]
\newtheorem{theorem}[lemma]{Theorem}
\newtheorem{definition}[lemma]{Definition}
\newtheorem{corollary}[lemma]{Corollary}
\newtheorem{example}{Example}
\author{Henry Towsner}
\title{Metastability and the Furstenberg-Zimmer Tower II: Polynomial and Multidimensional Szemer\'edi's Theorem}
\date{\today}
\begin{document}
\maketitle

\begin{abstract}    
  The Furstenberg-Zimmer structure theorem for $\mathbb{Z}^d$ actions says that every measure-preserving system can be decomposed into a tower of primitive extensions.  Furstenberg and Katznelson used this analysis to prove the multidimensional Szemer\'edi's theorem, and Bergelson and Liebman further generalized to a polynomial Szemer\'edi's theorem.  Beleznay and Foreman showed that, in general, this tower can have any countable height.  Here we show that these proofs do not require the full height of this tower; we define a weaker combinatorial property which is sufficient for these proofs, and show that it always holds at fairly low levels in the transfinite construction (specifically, $\omega^{\omega^{\omega^\omega}}$).
\end{abstract}

\section{Introduction}
If $\mathcal{X}$ is a measure preserving system acted on by a group $\Gamma$, the Furstenberg-Zimmer structure theorem shows that $\mathcal{X}$ may be decomposed into a tower of primitive extensions: that is, a tower $(\mathcal{Y}_\alpha)_{\alpha\leq\theta}$ such that $\mathcal{Y}_0$ is the trivial factor, limit levels are the limit of the preceeding factors, and each $\mathcal{Y}_{\alpha+1}$ is compact relative to some subgroup $\Delta\subseteq\Gamma$ and weak mixing relative to all $S\in\Gamma\setminus\Delta$ (see \cite{FurstenbergBook}).

When $\mathcal{X}$ is separable, a simple countable argument shows that this tower must have countable height, and Beleznay and Foreman \cite{foreman96} have shown that even when $\Gamma$ is $\mathbb{Z}$, the tower may reach any countable height.

The structure theorem is commonly used to prove finitary combinatorial results, and we might hope that, for these finitary applications, only a limited portion of the tower is necessary.  In \cite{avigad09}, Avigad and the author showed that the proof of Szemer\'edi's Theorem from the structure theorem for $\mathbb{Z}$ actions requires only the first $\omega^{\omega^\omega}$ levels of the tower.

In this paper, we apply similar methods to Bergelson and Liebman's multidimensional polynomial Szemer\'edi Theorem \cite{bergelson96}; as such, we follow the proof from \cite{bergelson96} closely, applying theorems from there directly without repeating the proof when possible.  These methods generalize Furstenberg and Katznelson's multidimensional Szemer\'edi Theorem \cite{furstenberg79}, and therefore apply to that argument as well.  In the one-dimensional polynomial case, the corresponding bound is $\omega^{\omega^{\omega^\omega}}$; the higher bound is due to the use of the PET induction scheme, which has order-type $\omega^\omega$.  In the multidimensional case, the situation is slightly more complicated, since we need a system of nested towers with each section having this larger height.  (In the multidimensional linear case, the theorems here show that the corresponding system of nested towers with each section having height $\omega^{\omega^\omega}$ suffices.)

A central idea in this paper is that, while the property of being a weakly mixing extension is quite infinitary, the finitary consequences of being weak mixing can also be extracted from a sufficiently long sequence of factors which are all ``almost'' weak mixing with the same parameters.  While there is no countable bound on how tall a tower must be to ensure that an extension is weak mixing, there is a countable bound which is sufficient to guarantee the existence of these almost weak mixing extensions.  This type of approximation to infinitary convergence has been called ``metastability'' by Tao \cite{tao08Norm}.

The bounds obtained here are not optimal; however they are, in some sense, ``a priori'': they are extracted directly from the proof, without additional combinatorial techniques.  \cite{avigad09} discusses the logical methods underlying this extraction.

Regarding optimal bounds, Furstenberg's original ergodic proof \cite{furstenberg77} used only $\omega$ levels of the tower to prove Szemer\'edi's Theorem; that method has not directly been generalized to the multidimensional or polynomial case, but more recent methods \cite{host05,tao06,ziegler07} are generally believed to be sufficient to show that $\omega$ levels of the tower suffice for the multidimensional polynomial Szemer\'edi's Theorem as well (this has not, to our knowledge, been written down, although \cite{ziegler08} can be seen as implying the claim for the single dimensional polynomial Szemer\'edi Theorem).

We are grateful to Vitaly Bergelson for answering questions about the proof of the polynomial Szemer\'edi Theorem, and to Jeremy Avigad, with whom most of the new techniques in this paper were originally developed.

\section{Preliminaries}
Bergelson and Liebman's polynomial Szemer\'edi Theorem~\cite{bergelson96} states:
\begin{theorem}
  Let $p_{1,1},\ldots,p_{1,t},p_{2,1},\ldots,p_{2,t},\ldots,p_{k,1},\ldots,p_{k,t}$ be a collection of polynomials with rational coefficients taking on integer values on the integers and satisfying $p_{i,j}(0)=0$ for all $i,j$.  Then for any $\delta>0$, there is an $N$ large enough so that if $S$ is any subset of $\mathbb{Z}^d$ with density at least $\delta$ and $v_1,\ldots,v_t\in\mathbb{Z}^d$, there exists an integer $n$ and a vector $u\in\mathbb{Z}^d$ so that
\[u+\sum_{j=1}^tp_{i,j}(n)v_j\in S\]
for every $i\leq k$.
\end{theorem}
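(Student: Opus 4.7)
The plan is to follow the standard ergodic-theoretic template pioneered by Furstenberg. First, apply the Furstenberg correspondence principle to translate the combinatorial density statement into a multiple recurrence statement: it suffices to show that for every measure-preserving $\mathbb{Z}^d$-system $(X,\mathcal{B},\mu,T)$ and every $A\in\mathcal{B}$ with $\mu(A)>0$,
\[
\liminf_{N\to\infty}\frac{1}{N}\sum_{n=1}^N \mu\!\left(\bigcap_{i=1}^k T^{-\sum_{j=1}^t p_{i,j}(n)v_j}A\right) > 0.
\]
Once this inequality is established, choosing $N$ large and unwinding the correspondence yields the desired configuration in $S$.

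To prove the recurrence statement, I would invoke the Furstenberg-Zimmer structure theorem to present the system as a tower of primitive extensions $(\mathcal{Y}_\alpha)_{\alpha\leq\theta}$ and induct along the tower. The base case $\mathcal{Y}_0$ is immediate. At a compact extension (relative to a subgroup $\Delta\subseteq\mathbb{Z}^d$) the orbits are almost-periodic over the base, and a Hilbert-Schmidt approximation reduces the recurrence question to the base factor. At a weak mixing extension (relative to $\mathbb{Z}^d\setminus\Delta$), a van der Corput lemma shows that the multiple correlations essentially decouple, again reducing to the base.

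The central difficulty is the polynomial nature of the exponents $p_{i,j}(n)v_j$. To handle this I would follow Bergelson-Liebman's PET induction: assign each finite polynomial family a ``weight'' in the ordinal $\omega^\omega$, and show that a single application of van der Corput replaces the family by one of strictly smaller weight, eventually reducing to the multidimensional linear case handled by Furstenberg-Katznelson. The multidimensional aspect requires not a single tower but a system of nested towers, one factor tree refining another along the distinguished subgroups used in each application of the mixing/compact dichotomy.

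The principal obstacle, and the specific contribution the paper needs to make, is that each appeal to weak mixing in the inductive step is an infinitary hypothesis with no a priori countable bound on the required tower height; by the Beleznay-Foreman result such a bound is impossible in general. Following the metastability philosophy, I would replace each ``weak mixing'' invocation by a finitary ``almost weak mixing with parameters $\varepsilon$ and finite test-function set $F$'' condition which van der Corput estimates can still close off, and which can always be located at some bounded countable level of the tower. The delicate part is tracking how the $\varepsilon$ and $F$ at one stage of PET induction dictate the required parameters at the next, and how those parameters then feed into the required height of the nested towers across the $d$ coordinate directions. Bookkeeping the resulting ordinal arithmetic — the PET scheme contributing $\omega^\omega$, one nested tower layer contributing $\omega^{\omega^\omega}$, and the multidimensional iteration supplying the final exponent — should yield the claimed bound $\omega^{\omega^{\omega^\omega}}$.
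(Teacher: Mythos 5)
Your proposal is correct and follows essentially the same route as the paper: Furstenberg's correspondence principle reducing the density statement to a polynomial multiple recurrence theorem, then Bergelson--Liebman's PET induction over a well-ordered weight hierarchy combined with the compact/weak-mixing dichotomy along a (nested, for the multidimensional case) tower of extensions, with the weak-mixing appeals replaced by metastable ``almost weak mixing'' levels exactly as the paper does via its towers of $\Gamma$-compact extensions and the ``$t$-many $\delta$'' formulation. The only remark worth adding is that for the bare statement as quoted, the classical structure-theorem argument already suffices; the metastable bookkeeping you describe is what the paper adds in order to bound the required tower height by $\omega^{\omega^{\omega^\omega}}$.
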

Using the correspondence principle introduced by Furstenberg~\cite{furstenberg77}, they obtain this as a consequence of the following ergodic theorem:
\begin{theorem}
  Let $(X,\mathcal{B},\mu,\mathbb{Z}^d)$ be a dynamical system, let $T_1,\ldots,T_t$ actions of elements of $\mathbb{Z}^d$, and let $p_{1,1}(n), \ldots,$ $p_{1,t}(n),$ $p_{2,1}(n),$ $\ldots,$ $p_{2,t}(n),$ $\ldots,$ $p_{k,1}(n), \ldots,p_{k,t}(n)$ be a collection of polynomials with rational taking on integer values on the integers and satisfying $p_{i,j}(0)=0$ for all $i,j$.  Then for any $A\in\mathcal{B}$ with $\mu(A)>0$,
\[\liminf_{N\rightarrow\infty}\frac{1}{N}\sum_{n=0}^{N-1}\mu(\left(\prod_{j=1}^tT_j^{-p_{1,j}(n)}\right)A\cap\cdots\cap\left(\prod_{j=1}^tT_j^{-p_{k,j}(n)}\right)A)>0.\]
\end{theorem}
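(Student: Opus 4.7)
The plan is to follow the Bergelson-Liebman proof of the ergodic polynomial Szemer\'edi theorem step by step, but at each point where that proof invokes a genuine weak mixing extension, I will substitute a sufficiently long finite block of consecutive factors in the Furstenberg--Zimmer tower which are jointly ``almost'' weak mixing relative to the same subgroup with matching parameters. Such blocks must exist once the tower is tall enough, and the target ordinal bound $\omega^{\omega^{\omega^\omega}}$ arises from combining the order type $\omega^\omega$ of Bergelson's PET induction with the bound $\omega^{\omega^\omega}$ needed for a single metastable van der Corput step, as in \cite{avigad09}.

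I would first isolate, for each subgroup $\Delta\subseteq\mathbb{Z}^d$ generated by a subset of $\{T_1,\ldots,T_t\}$, a metastable form of weak mixing relative to $\Delta$: given a tolerance $\epsilon>0$ and a finite \Folner window $F$, a factor $\mathcal{Y}_\alpha\to\mathcal{Y}_\beta$ is $(\epsilon,F)$-almost weak mixing relative to $\Delta$ if the usual $L^2$ correlation inequality for weak mixing holds within error $\epsilon$, uniformly on $F$. A pigeonhole lemma, proved by transfinite induction on tower height, then shows that any tower of height at least a specified ordinal contains arbitrarily long finite runs of consecutive factors sharing a subgroup $\Delta$ and $(\epsilon,F)$-almost weak mixing with uniform parameters. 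The cost of this pigeonhole is roughly exponential in $\omega$ per quantifier alternation, driving the total up to $\omega^{\omega^{\omega^\omega}}$ once the parameter sequence demanded by the ensuing PET induction is taken into account.

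Next I would carry out the PET induction of \cite{bergelson96} in this metastable setting. PET reduces a polynomial family $\{p_{i,j}\}$ to strictly simpler families by applying van der Corput's lemma with an auxiliary shift, and is well-founded of order type $\omega^\omega$. In the infinitary argument each application of van der Corput uses weak mixing of a certain product extension to conclude that an average tends to zero; in the metastable version one instead invokes a block of $(\epsilon,F)$-almost weak mixing factors supplied by the pigeonhole above, with $(\epsilon,F,\Delta)$ dictated by the current stage of PET. The multidimensional recurrence inequality is then obtained by lifting $\mathbf{1}_A$ through a system of nested towers---an outer tower analysing the ambient system, and at each outer level inner towers for the compact directions PET requires---each of height $\omega^{\omega^{\omega^\omega}}$, and assembling the local metastable estimates into a uniform lower bound on the Ces\`aro average, which then transfers back to $\mathcal{X}$ by a standard limit argument.

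The main obstacle will be the interleaving between the PET induction and the metastable pigeonhole: at each PET step one must specify not just the tolerance $\epsilon$ and window $F$ but also the length of the next run needed and the subgroup $\Delta$ witnessing compactness, and all of these depend on the current polynomial family, which is itself only well-ordered by the PET ordering. One must therefore formulate the pigeonhole with enough uniformity that a single pre-chosen tower suffices to supply almost-weak-mixing blocks for every PET trajectory of bounded complexity. A secondary technical difficulty is tracking how $\Delta$ evolves under multidimensional PET reductions, where the relevant compactness subgroup changes with each application of van der Corput and depends nontrivially on the polynomial data.
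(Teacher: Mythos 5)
Your overall strategy is the same as the paper's: run the Bergelson--Liebman PET induction with genuine weak mixing replaced by a metastable, ``almost weak mixing'' surrogate extracted from a tall tower, with the $\omega^{\omega}$ order type of PET compounding the $\omega^{\omega^\omega}$-type cost of each metastable van der Corput step to give $\omega^{\omega^{\omega^\omega}}$, and with a nested system of towers handling the multidimensional compactness directions. However, the central lemma you propose to hang this on is not tenable as stated, and the gap is precisely where the metastability has to enter. You ask for a pigeonhole lemma producing ``arbitrarily long finite runs of consecutive factors'' that are $(\epsilon,F)$-almost weak mixing \emph{as extensions}, with uniform parameters, so that a single pre-chosen tower supplies such blocks for every PET trajectory. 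If the almost weak mixing is meant uniformly over the relevant class of bounded functions, no such level (let alone a run) need exist below the point where the tower stabilizes: any level $\delta$ with $Z_\Gamma(\mathcal{Y}_\delta)\not\subseteq\mathcal{Y}_\delta$ carries a nontrivial $\Gamma$-compact function $g$ whose correlations $E(gT^{-i}g\mid\mathcal{Y}_\delta)-E(g\mid\mathcal{Y}_\delta)T^{-i}E(g\mid\mathcal{Y}_\delta)$ do not average small on any window, and by Beleznay--Foreman the stabilization point admits no countable bound. The paper avoids this by reversing the quantifiers: the functions $f,g$ (or the family $\{f_p\}$) are given \emph{first}, and only then does one find a rate $n$ and an $\alpha$-subsequence $t$ such that the correlation estimate holds for $t$-many $\delta$ (one $\delta$ in each interval of $t$); the mechanism is an energy-increment pigeonhole on the increments $E(f\mid\mathcal{Y}_{\delta+1})-E(f\mid\mathcal{Y}_\delta)$ inside each $\omega$-block, combined with the observation that $H^T_g\ast_{\mathcal{Y}_\delta}h_\delta$ is $\Gamma\cup\{T\}$-compact relative to $\mathcal{Y}_\delta$ and hence lies in $\mathcal{Y}_{\delta+1}$, killing the cross term. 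Nothing about consecutive runs or function-uniform blocks is either provable or needed.

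A second, related omission: the ``for $t$-many $\delta$'' distribution of good levels is not a technical nicety but is what makes the argument compose. It is consumed twice---once by the composition lemmas (the analogues of Lemmas \ref{tech:pair}--\ref{tech:tech}) that splice the IH estimate $(\dagger_1)$ with the degree-one estimate $(\dagger_2)$ inside each PET step, and this splicing is where the ordinal bookkeeping $\theta<\omega^{\gamma\cdot q(D)+q'(D)}$ actually comes from---and again in the final assembly, which in the paper is not a direct lower bound on the Ces\`aro average but an induction on the SZP property: one picks levels $\delta_m$ with error $1/m$ from Corollary \ref{polywm3}, passes to the limit level $\delta=\lim_m\delta_m$, and feeds this into the strengthened (continuity) form of the Bergelson--Liebman compact-extension lemma to show that $Z_{d-n-1}(\mathcal{Y}_0)\cup\bigcup_{\delta<\omega_4}\mathcal{Y}_\delta$ is again SZP, before closing the induction over the nested towers (compact extensions and limits preserving SZP). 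Your sketch gestures at this assembly but does not specify how the local estimates at scattered good levels cohere at a single factor against which the Bergelson--Liebman almost-periodicity argument can be run; that coherence is exactly what the limit-of-$\delta_m$ step plus the $\epsilon'$-continuity strengthening provide, and without them the plan does not close.
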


Fix the integer $d$.  Throughout this paper, we will be concerned with measure preserving systems of the form $(X,\mathcal{B},\mu,\Gamma)$ where $\Gamma$ is a subgroup of $\mathbb{Z}^d$.  We will also be concerned with minimal sets of generators for $\Gamma$; that is, linearly independent sequences $T_1,\ldots,T_t\in\Gamma$ generating $\Gamma$.

\begin{definition}
If $(X,\mathcal{B},\mu,\Gamma)$ is an extension of $(Y,\mathcal{C},\nu,\Gamma)$, we say it is a \emph{compact} extension if for every $f\in L^2(X)$ and any $\epsilon,\delta>0$, there exist $B\in\mathcal{C}$ with $\nu(B)>1-\epsilon$ and a finite set of functions $h_1,\ldots,h_K\in L^2(X)$ such that for each $R\in\Gamma$, $\min_{1\leq i\leq K}||R(f\cdot \chi_B)-h_l||_y<\delta$ for all $y\in B$.
\end{definition}
See \cite{FurstenbergBook} for an extensive discussion of the properties of compact extensions.  Note that every function in $L^2(Y)$ is compact relative to $Y$.

\begin{definition}
  Let $\mathcal{X}:=(X,\mathcal{B},\mu,\mathbb{Z}^d)$ be a dynamical system and let $\Gamma\subseteq\mathbb{Z}^d$ be a subgroup and $T_1,\ldots,T_t\in\Gamma$ a minimal set of generators.  For any $L^\infty$ function $g$, we define
\[H_g^{n,T_1,\ldots,T_t}:=\frac{1}{n^{k}}\sum_{\vec i\in [0,n]^{t}}(T^{-i_1} \cdots T^{-i_t}(g\otimes g)).\]
\end{definition}
The Mean Ergodic Theorem for $\mathbb{Z}^d$ actions \cite{wiener39} implies that the functions $H^{n,T_1,\ldots,T_t}_g$ converge to a limit $H^{T_1,\ldots,T_t}_g$.  In particular, it is standard that an extension generated by functions of the form $H_g^{T_1,\ldots,T_t}f$ is compact (relative to the group generated by $T_1,\ldots,T_t$), and that conversely, every compact function is a limit of such functions.

\begin{definition}
  An increasing tower of factors of height $\gamma$ is a sequence of factors $(\mathcal{Y}_\delta)_{\delta<\gamma}$ such that $\alpha<\beta<\gamma$ implies $\mathcal{Y}_\alpha\subseteq\mathcal{Y}_\beta$ and whenever $\lambda<\gamma$ is a limit ordinal, $\mathcal{Y}_\lambda$ is generated by $\bigcup_{\beta<\lambda}\mathcal{Y}_\beta$.

  If $\Gamma$ is a group, we define $Z_\Gamma(\mathcal{Y})$ to be the space of all functions compact relative to $\mathcal{Y}$ with respect to the group $\Gamma$.  Given a fixed action of $\mathbb{Z}^d$ on $\mathcal{Y}$, define $Z_t(\mathcal{Y})$ to be the space generated by the union of $Z_\Gamma(\mathcal{Y})$ as $\Gamma$ ranges over subgroups of $\mathbb{Z}^d$ of dimension $\geq t$.

  If $\Gamma$ is a group, a tower of $\Gamma$-compact extensions is an increasing tower of factors $(\mathcal{Y}_\delta)_{\alpha<\gamma}$ such that for each $\alpha$, $Z_\Gamma(\mathcal{Y}_\alpha)\subseteq\mathcal{Y}_{\alpha+1}$.

  If $\Gamma\subsetneq\mathbb{Z}^d$ is a group, a tower of $\Gamma^+$-compact extensions is an increasing tower of factors $(\mathcal{Y}_\delta)_{\alpha<\gamma}$ such that for each $\alpha$ and each $S\in\mathbb{Z}^d\setminus\Gamma$, $Z_{\Gamma\cup\{S\}}(\mathcal{Y}_\delta)\subseteq\mathcal{Y}_{\alpha+1}$.

\end{definition}
(Note that in a tower of $\Gamma$-compact extensions, we do not require that $\mathcal{Y}_{\alpha+1}$ be a compact extension of $\mathcal{Y}_\alpha$; rather, we require that it contain every compact extension of $\mathcal{Y}_\alpha$.)

\begin{definition}
Let $\mathcal{Z},\eta$ be given.  We define a tower of factors by main induction on $n\leq d$ and side induction on $\alpha$:
\begin{itemize}
\item $\mathcal{Y}^{d-n,\eta}_0(\mathcal{Z}):=\mathcal{Z}$
\item $\mathcal{Y}^{d,\eta}_{\alpha+1}(\mathcal{Z}):=Z_d(\mathcal{Y}^{d,\eta}_\alpha(\mathcal{Z}))$
\item $\mathcal{Y}^{d-n-1,\eta}_{\alpha+1}(\mathcal{Z}):=Z_{d-n-1}(\mathcal{Y}^{d-n-1,\eta}_\alpha(\mathcal{Z}))\cup\mathcal{Y}^{d-n,\eta}_\eta(\mathcal{Y}^{d-n-1,\eta}_\alpha(\mathcal{Z}))$\footnote{We could replace this with $\bigcup_{\gamma<\eta}Z_{d-n-1}(\mathcal{Y}^{d-n,\eta}_\gamma(\mathcal{Y}^{d-n-1,\eta}_\alpha(\mathcal{Z})))$ while making only small changes in the proofs in this paper; however the system $Z_{d-n-1}(\mathcal{Y}^{d-n,\eta}_\eta(\mathcal{Y}^{d-n-1,\eta}_\alpha(\mathcal{Z})))$ can contain elements which are not approximately weak mixing relative to $\mathcal{Y}^{d-n-1,\eta}_\alpha(\mathcal{Z})$.}
\item For limit $\lambda$, $\mathcal{Y}^{d-n,\eta}_\lambda:=\bigcup_{\beta<\lambda}\mathcal{Y}^{d-n,\eta}_\beta$
\end{itemize}
\end{definition}
In particular, note that for any $\mathcal{Z}$, $\eta$, $n$, $\{\mathcal{Y}^{d-n,\eta}_\alpha(\mathcal{Z})\}_{\alpha<\eta}$ is a tower of $\Gamma$-compact extensions for every $\Gamma\subseteq\mathbb{Z}^d$ of dimension $d-n$.  When we take $\mathcal{Z}$ to be the trivial factor, note that $\mathcal{Y}^{0,\eta}_1=\mathcal{X}$ for any $\eta$ (since everything in $\mathcal{X}$ is compact with respect to the group of dimension $0$).

The main result is:
\begin{theorem}
For every $\epsilon>0$, every $\Gamma\subseteq\mathbb{Z}^d$, every linearly independent $T_1,\ldots,T_t\in\mathbb{Z}^d\setminus\Gamma$, every tower $(\mathcal{Y}_\delta)_{\delta<\eta}$ of $\Gamma\cup\{S\}$-compact extensions for every $S$ generated by $T_1,\ldots,T_t$, every sequence of polynomials $p_{1,1},\ldots,p_{1,t},p_{2,1},\ldots,p_{2,t},\ldots,p_{k,1},\ldots,p_{k,t}$ as in the polynomial Szemer\'edi Theorem, all functions $f_1,\ldots,f_k$ in $L^\infty(\mathcal{X})$, there is a $\delta<\omega^{\omega^{\omega^\omega}}$ and an $n$ such that, for all $m\geq n$,
\[\frac{1}{m}\sum_{i<m}\int \left|E(\prod_{r\leq k}\prod_{j\leq t}T_j^{-p_{r,j}(i)}f_r\mid\mathcal{Y}_\delta)-\prod_{r\leq k}\prod_{j\leq t}T_j^{-p_{r,j}(i)}E(f_r\mid\mathcal{Y}_\delta)\right|d\mu<\epsilon.\]
\end{theorem}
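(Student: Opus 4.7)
The plan is to follow the Bergelson--Liebman proof of the polynomial Szemer\'edi theorem from \cite{bergelson96} while tracking, at each step, which levels of the tower are actually consumed.  First I would split each $f_r = E(f_r\mid \mathcal{Y}_\delta) + g_r$ at the eventually-chosen level $\delta$ and expand the product $\prod_{r,j} T_j^{-p_{r,j}(i)} f_r$ multilinearly.  Since each factor $\mathcal{Y}_\delta$ in the tower is $\mathbb{Z}^d$-invariant, the ``all projected'' term produces exactly $\prod_{r,j} T_j^{-p_{r,j}(i)} E(f_r\mid \mathcal{Y}_\delta)$, and the quantity to control reduces to the averaged $L^1$ norm of the sum of mixed terms in which at least one factor is a weakly mixing remainder $g_r$.

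The mixed terms are analysed via van der Corput combined with the PET induction scheme of \cite{bergelson96}.  PET induction assigns each polynomial system an ordinal complexity in $\omega^\omega$; each application of van der Corput replaces a polynomial average by a finite collection of polynomial averages of strictly lower PET complexity, and the induction terminates at expressions of the form $H_g^{n,T_1,\ldots,T_t}$, which, when small in an averaged $L^2$ sense, precisely witness approximate weak mixing of $g$ relative to the subgroup generated by $T_1,\ldots,T_t$.

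The existence of the witnessing level $\delta$ is then secured by a metastability/pigeonhole argument: if the approximate weak mixing required at some stage failed at tower level $\alpha$, then a genuinely new compact function relative to $\mathcal{Y}_\alpha$ with respect to $\Gamma\cup\{S\}$ would appear, and by the closure hypothesis of the tower it would enter $\mathcal{Y}_{\alpha+1}$; this can happen only countably often, with an explicit countable ordinal bound.  Quantifying this: each PET step consumes a tower segment whose height is the linear-case bound of \cite{avigad09} adapted to $\mathbb{Z}^d$, and interleaving the $\omega^\omega$ many PET stages with the nested construction $\mathcal{Y}^{d-n,\eta}$---whose purpose is to provide compactness relative to the various subgroups arising from successive van der Corput reductions in the multidimensional setting---yields the stated ordinal $\omega^{\omega^{\omega^\omega}}$.

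The main obstacle is the bookkeeping for this interleaving.  The subgroup with respect to which compactness is needed changes after each van der Corput reduction, so one cannot simply iterate a single-tower construction; the nested definition $\mathcal{Y}^{d-n,\eta}$ exists precisely to supply the required compact functions at the right outer level, and the delicate point is to verify that the composition of inner and outer towers does not exceed the claimed $\omega^{\omega^{\omega^\omega}}$.  A secondary difficulty, which the metastability framework is designed to manage, is that the witness $\delta$ must be chosen uniformly in $m\geq n$ and in all $i<m$, so every step of the PET reduction must be carried out in the metastable reformulation rather than in the raw infinitary form of weak mixing.
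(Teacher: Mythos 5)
Your skeleton (split each $f_r$ into $E(f_r\mid\mathcal{Y}_\delta)$ plus a remainder, van der Corput plus PET induction, reduce to approximate relative weak mixing, handle everything metastably) is the paper's skeleton, but the step on which everything hinges --- how the witness level $\delta$ is produced with an \emph{explicit} ordinal bound --- is justified by an argument that does not work. You say that if approximate weak mixing fails at level $\alpha$ then a genuinely new compact function enters $\mathcal{Y}_{\alpha+1}$, and that ``this can happen only countably often, with an explicit countable ordinal bound.'' That is the classical stabilization argument, and it yields no explicit bound: by the Beleznay--Foreman result quoted in the introduction, the chain of genuinely new compact extensions can have any countable length, which is precisely the obstacle this paper is built to circumvent. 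The actual mechanism (Theorem \ref{wm}) is quantitative in a different way: writing $h_\delta=f-E(f\mid\mathcal{Y}_\delta)$, the weak-mixing defect equals $\int h_\delta\cdot(H^{m,T}_g\ast_{\mathcal{Y}_\delta}h_\delta)\,d\mu$; closure of the tower under $\Gamma\cup\{T\}$-compact extensions puts $H^{T}_g\ast_{\mathcal{Y}_\delta}h_\delta$ into $L^2(\mathcal{Y}_{\delta+1})$, so the main term is killed by orthogonality as soon as one finds consecutive selected levels with $||E(f\mid\mathcal{Y}_{\delta+1})-E(f\mid\mathcal{Y}_\delta)||$ small; such levels are found by an energy pigeonhole along an $\omega\cdot\alpha$-subsequence, combined with the metastable Mean Ergodic Theorem (Theorem \ref{ergodic}) to make $H^{m,T}_g\ast_{\mathcal{Y}_\delta}h$ close to its limit at many levels with one $n$. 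The bound comes from the bounded, monotone energy of the projections, not from counting how often new compact functions appear.

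A second gap is the form of the inductive statement. Proving, at each PET stage, merely ``there exist $\delta$ and $n$'' does not compose: the reduction needs several conditions (the splitting of each $f_p$ into projection and remainder, the two estimates $(\dagger_1)$ and $(\dagger_2)$, and the van der Corput shifts $h\in[-H,H]$) to hold \emph{at the same} $\delta$ with the same $n$. This is why the paper proves the stronger statement that for every $\alpha^\theta$-sequence $s$ there are an $n$ and an $\alpha$-subsequence $t$ such that the estimate holds for $t$-many $\delta$, and combines such properties with Lemmas \ref{tech:pair}, \ref{tech:split}, and \ref{tech:tech}; your proposal acknowledges the uniformity problem but supplies no mechanism for it. Finally, the nested towers $\mathcal{Y}^{d-n,\eta}$ are irrelevant to this particular theorem, which concerns a single tower of $\Gamma\cup\{S\}$-compact extensions: the bound $\omega^{\omega^{\omega^\omega}}$ arises as $\alpha^\theta$ with $\alpha=\omega$ and $\theta<\omega^{\gamma\cdot q(D)+q'(D)}$, where $\gamma<\omega^\omega$ is the weight-matrix ordinal of the polynomial system (Theorem \ref{polywm} and its corollaries); the nested construction enters only later, in the SZP/primitive-extension argument, so attributing the exponent stack to interleaving with it misplaces the source of the bound.
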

Note that, in particular, the towers $\mathcal{Y}^{d-n,\eta}_\alpha(\mathcal{Z})$, where $\Gamma$ has dimension $d-n$, always satisfy the premise.  We obtain this inductively using a stronger property, namely that this holds not for one $\delta$, but for many $\delta$ simultaneously in the same $n$.

If $\theta$ and $\eta$ are ordinals, $(\theta,\eta]$ denotes the interval $\{\delta\mid\theta<\delta\leq\eta\}$.

\begin{definition}
  If $\alpha$ is an ordinal, $s$ is an \emph{$\alpha$-sequence} if $s=(s_\beta)_{\beta\leq\alpha}$ is a strictly increasing sequence of ordinals indexed by ordinals less than or equal to $\alpha$.  Say $t$ is a \emph{$\beta$-subsequence} of $s$ if $t$ is a $\beta$-sequence and a subsequence of $s$.

If $s$ is an $\alpha$-sequence and $P(\delta)$ is any property, say $P$ \emph{holds for $s$-many $\delta$} if for every $\beta<\alpha$, there is a $\delta$ in $(s_\beta,s_{\beta+1}]$ such that $P(\delta)$ holds.
\end{definition}

\section{Approximating Weak Mixing}
The following metastable form of the Mean Ergodic Theorem follows immediately from Theorem 4.4 of \cite{avigad09}:
\begin{theorem}
  Let $\epsilon>0$, $B>0$, and let $(\mathcal{Y}_\delta)$ be an increasing tower of factors.  Then there is a natural number $K$ such that for every $\alpha^K$-sequence $s$ and every $g$ in $L^\infty(\mathcal{X})$ with $||g||_\infty\leq B$, there are a natural number $n$ and an $\alpha$-subsequence $t$ of $s$ such that the property
  \begin{quote}
    for every $m\geq n$ and $h$ with $||h||_{L^2(\mathcal{X})}\leq B$, $||H^{m,T}_g\ast_{\mathcal{Y}_\delta}h-H^T_g\ast_{\mathcal{Y}_\delta}h||<\epsilon$
  \end{quote}
holds for $t$-many $\delta$.
\label{ergodic}
\end{theorem}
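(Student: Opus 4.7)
The plan is to invoke Theorem~4.4 of \cite{avigad09} essentially as a black box and check that the present statement is a direct reformulation. That theorem establishes a metastable form of the Mean Ergodic Theorem relative to an increasing tower of factors: for any uniformly $L^2$-bounded sequence of averages that is known to converge and any sufficiently long input ordinal sequence $s$ (of length $\alpha^K$, where $K$ depends only on the target error and the uniform bound), one can extract an $\alpha$-subsequence $t$ along which, past some threshold $n$, the averages stabilize to the specified tolerance at $t$-many of the factor levels.

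The key reduction is to translate the $L^2$ convergence $H^{m,T}_g\to H^{T}_g$ guaranteed by \cite{wiener39} into the stated convolution form. Since $F\mapsto F\ast_{\mathcal{Y}_\delta}h$ is a bounded linear operation of norm at most $\|h\|_{L^2}$, one has
\[
\bigl\|H^{m,T}_g\ast_{\mathcal{Y}_\delta}h-H^{T}_g\ast_{\mathcal{Y}_\delta}h\bigr\|
\le \bigl\|H^{m,T}_g-H^{T}_g\bigr\|\cdot\|h\|_{L^2}.
\]
Thus it suffices to apply Theorem~4.4 with error tolerance $\epsilon/B$ to the sequence $\{H^{m,T}_g\}_m$ against the tower $(\mathcal{Y}_\delta)$, obtaining a $K$, an $n$, and an $\alpha$-subsequence $t$ of $s$ with the required properties. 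The uniform bound $\|g\|_\infty\le B$ yields a uniform $L^2$ bound on $H^{m,T}_g$ depending only on $B$ and $\mu(X)$, which is all that Theorem~4.4 requires in order for $K$ to depend only on $\epsilon$ and $B$.

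The only substantive point to verify is that the quantitative data supplied by Theorem~4.4 matches the exact shape used here: $K$ must depend only on $\epsilon$ and $B$ (not on $g$ or on $\alpha$), the output must be an $\alpha$-subsequence of an $\alpha^K$-sequence, and the conclusion must hold at \emph{$t$-many} $\delta$ in the sense of the definition on page~1. Each of these is built directly into the formulation of the cited theorem, so once the bilinear estimate above is in place the derivation is a routine unwinding of notation; the genuine content lives in \cite{avigad09}, where the metastable MET is obtained from a proof-theoretic analysis of the usual Hilbert-space proof.
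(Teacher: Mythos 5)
Your proposal is correct and takes the same route as the paper: the paper gives no proof of this statement at all beyond the one-line remark that it ``follows immediately from Theorem 4.4 of \cite{avigad09},'' and you likewise invoke that theorem as a black box. The one extra wrinkle you add, the bilinear estimate
\[
\bigl\|H^{m,T}_g\ast_{\mathcal{Y}_\delta}h-H^{T}_g\ast_{\mathcal{Y}_\delta}h\bigr\|\le \bigl\|H^{m,T}_g-H^{T}_g\bigr\|\cdot\|h\|_{L^2},
\]
is sound (it is the standard Hilbert--Schmidt bound for $F\mapsto F\ast_{\mathcal{Y}_\delta}h$ with $F\in L^2(\mathcal{X}\times_{\mathcal{Y}_\delta}\mathcal{X})$), but is unnecessary here because Theorem~4.4 of \cite{avigad09} is already phrased in the convolution form $H^{m,T}_g\ast_{\mathcal{Y}_\delta}h$, so there is nothing to translate; the only gap being bridged is the passage from $\mathbb{Z}$ to a single $T\in\mathbb{Z}^d$. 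Be a little careful in the way you describe the reduction: the limit $H^T_g$ and the norm in which the averages stabilize are implicitly taken in $L^2(\mathcal{X}\times_{\mathcal{Y}_\delta}\mathcal{X})$ and therefore depend on $\delta$, so one cannot literally ``apply Theorem~4.4 to the sequence $\{H^{m,T}_g\}_m$'' as a $\delta$-free object; the metastability over $\delta$ is exactly the content of the cited theorem, not something you may separate from the tower.
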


The following theorem is proven almost identically to the analogous Theorem 5.1 in \cite{avigad09}; the only difference is that we have to use the $\Gamma$-compactness of $g$ with respect to every element of the tower to obtain the needed $\Gamma\cup\{T\}$-compactness of $H^T_g\ast_{\mathcal{Y}_\delta}h_\delta$. 
\begin{theorem}
For every $\epsilon>0$ and $B>0$, there is a natural number $K$ such that for every $\Gamma\subseteq\mathbb{Z}^d$, every $T\in\mathbb{Z}^d\setminus\Gamma$, every tower $(\mathcal{Y}_\delta)_{\delta<\eta}$ of $\Gamma\cup\{T\}$-compact extensions, every $\alpha\geq\omega$, every $\alpha^K$-sequence $s$, and every $f,g$ compact with respect to $\Gamma$ relative to $\mathcal{Y}_0$ with $||f||_\infty,||g||_\infty\leq B$, there are an $n$ and an $\alpha$-subsequence $t$ of $s$ such that the property
  \begin{quote}
    for every $m\geq n$, $\frac{1}{m}\sum_{i<m}\int\left[E(fT^{-i}g\mid\mathcal{Y}_\delta)-E(f\mid\mathcal{Y}_\delta)T^{-i} E(g\mid\mathcal{Y}_\delta)\right]^2d\mu<\epsilon$\\
  \end{quote}
holds for $t$-many $\delta$.
\label{wm}
\end{theorem}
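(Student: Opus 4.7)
The plan is to follow Furstenberg's argument that, over a $T$-weakly mixing extension, $\frac{1}{m}\sum_i \int [E(fT^{-i}g\mid\mathcal{Y}) - E(f\mid\mathcal{Y})T^{-i}E(g\mid\mathcal{Y})]^2\, d\mu$ tends to zero, replacing each infinitary step with either Theorem~\ref{ergodic} or a pigeonhole on the tower. First I would set $\tilde f_\delta := f - E(f\mid\mathcal{Y}_\delta)$ and $\tilde g_\delta := g - E(g\mid\mathcal{Y}_\delta)$, so the integrand in question becomes $E(\tilde f_\delta\cdot T^{-i}\tilde g_\delta\mid\mathcal{Y}_\delta)^2$. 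Since $f$ and $g$ are $\Gamma$-compact over $\mathcal{Y}_0$ they are automatically $\Gamma$-compact over every larger $\mathcal{Y}_\delta$, and subtracting a $\mathcal{Y}_\delta$-measurable function preserves that property; this is exactly where the hypothesis that $g$ is $\Gamma$-compact relative to each element of the tower enters, as the authors flag.

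Next I would apply the fibered product identity $\int E(\phi\mid\mathcal{Y}_\delta)^2\,d\mu = \int \phi\otimes\phi\, d(\mu\times_{\mathcal{Y}_\delta}\mu)$ to rewrite the averaged quantity as $\int (\tilde f_\delta\otimes\tilde f_\delta)\cdot H_{\tilde g_\delta}^{m,T}\, d(\mu\times_{\mathcal{Y}_\delta}\mu)$, which equals $\int \tilde f_\delta\cdot (H_{\tilde g_\delta}^{m,T}\ast_{\mathcal{Y}_\delta}\tilde f_\delta)\,d\mu$ after moving the convolution. I then invoke Theorem~\ref{ergodic} with a tolerance on the order of $\epsilon/(3B^2)$, applied to the tower and to $\tilde g_\delta$, to pass to an $\alpha^{K_1}$-subsequence along which $H_{\tilde g_\delta}^{m,T}\ast_{\mathcal{Y}_\delta}\tilde f_\delta$ is uniformly close to the limit $H^T_{\tilde g_\delta}\ast_{\mathcal{Y}_\delta}\tilde f_\delta$, thereby reducing the problem to controlling $\int \tilde f_\delta\cdot (H^T_{\tilde g_\delta}\ast_{\mathcal{Y}_\delta}\tilde f_\delta)\,d\mu$.

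The key structural step is then to observe that the function $H^T_{\tilde g_\delta}\ast_{\mathcal{Y}_\delta}\tilde f_\delta\in L^2(\mathcal{X})$ is $\Gamma\cup\{T\}$-compact relative to $\mathcal{Y}_\delta$: the operator $h\mapsto H^T_{\tilde g_\delta}\ast_{\mathcal{Y}_\delta}h$ is relatively Hilbert--Schmidt over $\mathcal{Y}_\delta$ and is (approximately) $T$-invariant, giving compactness in the $T$-direction, while the $\Gamma$-compactness of $\tilde g_\delta$ over $\mathcal{Y}_\delta$ transports through the convolution to give compactness in the $\Gamma$-direction. Because the tower is one of $\Gamma\cup\{T\}$-compact extensions, this function lies in $\mathcal{Y}_{\delta+1}$, so $\int \tilde f_\delta\cdot (H^T_{\tilde g_\delta}\ast_{\mathcal{Y}_\delta}\tilde f_\delta)\,d\mu = \int E(\tilde f_\delta\mid\mathcal{Y}_{\delta+1})\cdot (H^T_{\tilde g_\delta}\ast_{\mathcal{Y}_\delta}\tilde f_\delta)\,d\mu$, which Cauchy--Schwarz bounds by a constant (depending only on $B$) times $||E(f\mid\mathcal{Y}_{\delta+1}) - E(f\mid\mathcal{Y}_\delta)||_{L^2}$.

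Finally, the sequence $||E(f\mid\mathcal{Y}_\delta)||_{L^2}^2$ is monotone non-decreasing and bounded by $||f||_\infty^2$, so $\sum_\delta ||E(f\mid\mathcal{Y}_{\delta+1}) - E(f\mid\mathcal{Y}_\delta)||_{L^2}^2 \leq ||f||_\infty^2$; only finitely many $\delta$ can have a successive increment exceeding any fixed threshold, and a pigeonhole on an $\alpha^{K_2}$-subsequence produces an $\alpha$-sub-subsequence on which the increments are small for $t$-many $\delta$. Composing the two reductions yields a uniform $K$ (some product of $K_1$ and $K_2$) depending only on $\epsilon$ and $B$. The main obstacle is the structural claim that $H^T_{\tilde g_\delta}\ast_{\mathcal{Y}_\delta}\tilde f_\delta$ is $\Gamma\cup\{T\}$-compact over $\mathcal{Y}_\delta$ with bounds independent of $\delta$, since this is the quantitative analogue of the classical compact-extension lemma and is precisely where one must use $\Gamma$-compactness of $\tilde g_\delta$ at each level of the tower rather than only at $\mathcal{Y}_0$; this is the sole substantive difference from the linear argument in \cite{avigad09}.
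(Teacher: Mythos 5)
Your proposal follows essentially the same route as the paper's proof: reduce to $\int h_\delta\cdot(H^{m,T}\ast_{\mathcal{Y}_\delta}h_\delta)\,d\mu$ with $h_\delta := f - E(f\mid\mathcal{Y}_\delta)$, replace $H^{m,T}$ by $H^T$ via Theorem~\ref{ergodic}, observe that $H^T\ast_{\mathcal{Y}_\delta}h_\delta\in L^2(\mathcal{Y}_{\delta+1})$ by $\Gamma\cup\{T\}$-compactness (using $\Gamma$-compactness of $g$ over every $\mathcal{Y}_\delta$, not just $\mathcal{Y}_0$), and pigeonhole on small increments of $E(f\mid\mathcal{Y}_\delta)$.

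One technicality to tighten: you say you apply Theorem~\ref{ergodic} ``to $\tilde g_\delta$,'' but $\tilde g_\delta = g - E(g\mid\mathcal{Y}_\delta)$ depends on $\delta$, whereas in Theorem~\ref{ergodic} the kernel $g$ must be fixed before $n$ and the subsequence $t$ are extracted (only $h$ is quantified inside the metastability). The fix is to keep $g$ itself as the kernel, as the paper does by subtracting the projection only from $f$; this changes nothing, since $E(\tilde f_\delta\mid\mathcal{Y}_\delta)=0$ forces $\int\tilde f_\delta\cdot\bigl(H^{m,T}_{g}\ast_{\mathcal{Y}_\delta}\tilde f_\delta\bigr)\,d\mu = \int\tilde f_\delta\cdot\bigl(H^{m,T}_{\tilde g_\delta}\ast_{\mathcal{Y}_\delta}\tilde f_\delta\bigr)\,d\mu$. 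With that adjustment, composing your ergodic step ($K_1$) with the increment pigeonhole ($K_2$) is the same bookkeeping the paper carries out in a single pass by restricting the given $\alpha^{2K}$-sequence to an $(\omega\cdot\alpha)^K$-subsequence and choosing, within each $\omega$-block of the resulting $\omega\cdot\alpha$-subsequence, a level $\delta_i$ at which the ergodic bound and the small-increment condition hold simultaneously.
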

\begin{proof}
For any $\delta$, if we set $h_\delta$ equal to $f-E(f\mid \mathcal{Y}_\delta)$, we have
  \begin{eqnarray*}
    \frac{1}{m}\sum_{i<m}\int\left[E(fT^{-i}g\mid\mathcal{Y}_\delta)-E(f\mid\mathcal{Y}_\delta)T^{-i}E(g\mid\mathcal{Y}_\delta)\right]^2d\mu\\
    =\frac{1}{m}\sum_{i<m}\int\left[E((h_\delta+E(f\mid\mathcal{Y}_\delta))T^{-i}g\mid\mathcal{Y}_\delta)-E(f\mid\mathcal{Y}_\delta)T^{-i}E(g\mid\mathcal{Y}_\delta)\right]^2d\mu\\
    =\frac{1}{m}\sum_{i<m}\int\left[E(h_\delta T^{-i}g\mid\mathcal{Y}_\delta)\right]^2d\mu\\
    =\frac{1}{m}\sum_{i<m}\int E(h_\delta T^{-i}g\mid\mathcal{Y}_\delta)E(h_\delta T^{-}ig\mid\mathcal{Y}_\delta)d\mu\\
    =\int E(h_\delta \frac{1}{m}\sum_{i<m}T^{-}igE(h_\delta T^{-i}g\mid\mathcal{Y}_\delta)\mid\mathcal{Y}_\delta)d\mu\\
    =\int E(h_\delta H^{m,T}_g\ast_{\mathcal{Y}_\delta}h_\delta\mid\mathcal{Y}_\delta) d\mu\\
    =\int h_\delta \cdot H^{m,T}_g\ast_{\mathcal{Y}_\delta}h_\delta d\mu
  \end{eqnarray*}
Since $g$ is compact relative to $\mathcal{Y}_0$ with respect to $\Gamma$, it compact relative to $\mathcal{Y}_\delta$ with respect to $\Gamma$.  Then $H^T_g\ast_{\mathcal{Y}_\delta}h_\delta=\lim_{m\rightarrow\infty}\frac{1}{m}\sum_{i<m}T^ig E(h_\delta T^{-i}g\mid\mathcal{Y}_\delta)$ is a limit of a sum of a product of a function in $L^2(\mathcal{Y}_\delta)$ with a function compact relative to $\mathcal{Y}_\delta$, and is therefore itself compact relative to $\mathcal{Y}_\delta$ with respect to $\Gamma$.  Further, by construction it is compact relative to $\mathcal{Y}_\delta$ with respect to $T$.  It is standard (see \cite{FurstenbergBook}) that $H^T_g\ast_{\mathcal{Y}_\delta}h_\delta$ is therefore compact relative to $\mathcal{Y}_\delta$ with respect to the group generated by $\Gamma\cup\{T\}$, and therefore $H^T_g\ast_{\mathcal{Y}_\delta}h_\delta\in L^2(\mathcal{Y}_{\delta+1})$.

Given $\epsilon>0$ and $B>0$, choose $K$ given by Lemma \ref{ergodic} for $\epsilon/2B$, $B$.  We claim that $2K$ satisfies the claim.  Suppose we are given an $\alpha^{2K}$-sequence $s$ and $f$ and $g$ with $||f||_\infty\leq B$, $||g||_\infty\leq B$.  Since $\alpha\geq\omega$, we may restrict $s$ to the initial $(\omega\cdot\alpha)^K$-subsequence, and by our choice of $K$, there is an $n$ and an $\omega\cdot\alpha$-subsequence $t$ with the property that
\begin{quote}
  for every $m\geq n$ and $h$ with $||h||_{L^2(\mathcal{X})}\leq B$, $||H^{m,T}_g\ast_{\mathcal{Y}_\delta}h-H_g\ast_{\mathcal{Y}_\delta}h||<\epsilon/2$ ($\ast$)
\end{quote}
holds for $t$-many $\delta$.  Let $t'$ be the $\alpha$-sequence obtained by setting $t'_\beta:=t_{\omega\cdot\beta}$ for each $\beta\leq\alpha$.  Then for each $\beta<\alpha$ and each $i$, there is a $\delta_i\in(t_{\omega\cdot\beta+i},t_{\omega\cdot\beta+i+1}]$ such that ($\ast$) holds.  In particular, there is some $i$ such that $||h_{\delta_i+1}-h_{\delta_i}||=||E(f\mid\mathcal{Y}_{\delta_i+1})-E(f\mid\mathcal{Y}_{\delta_i})||<\epsilon/2B^2$, and so for $\delta:=\delta_i$, we have
\begin{eqnarray*}
h_\delta\cdot(H^{m,T}_g\ast_{\mathcal{Y}_\delta}h_\delta)=&h_\delta\cdot((H^{m,T}_g\ast_{\mathcal{Y}_\delta}h_\delta)-(H^T_g\ast_{\mathcal{Y}_\delta}h_\delta))\\
&+(h_\delta-h_{\delta+1})\cdot(H^T_g\ast_{\mathcal{Y}_\delta}h_{\delta+1})\\
&+h_{\delta+1}\cdot(H^T_g\ast_{\mathcal{Y}_\delta}h_\delta).
\end{eqnarray*}
For every $m\geq n$, by ($\ast$), the first term is bounded by $||h_\delta||_\infty\cdot\epsilon/2B\leq\epsilon/2$ since $||h_\delta||_\infty\leq B$.  The second term is bounded in $L^2(\mathcal{X})$ norm by $(\epsilon/2B^2)\cdot||H^T_g\ast_{\mathcal{Y}_\delta}h_{\delta+1}||_\infty$, which is less than $\epsilon/2$ since $||H^T_g\ast_{\mathcal{Y}_\delta}h_{\delta+1}||_\infty\leq B^2$.  The integral of the last term is $0$ since $h_{\delta+1}$ is orthogonal to $\mathcal{Y}_{\delta+1}$ and $H^T_g\ast_{\mathcal{Y}_\delta}h_\delta$ is an element of $\mathcal{Y}_{\delta+1}$.  Hence $\int h_\delta\cdot (H^{m,T}_g\ast_{\mathcal{Y}_\delta}h_\delta)d\mu<\epsilon$ as required.
\end{proof}

\section{Polynomials}
The definitions in this section are essentially those of \cite{bergelson96}.
\begin{definition}
  An \emph{integral polynomial} is a polynomial with rational coefficients taking integer values on the integers.  An \emph{integral-zero polynomial} is a polynomial taking the value $0$ at $0$.
\end{definition}

Let $t$ be fixed.  If $\langle p_{j}\rangle_{j\in[1,t]}$ is a sequence of integral polynomials of degree at most $D$, the \emph{degree} of $\langle p_j\rangle$ is $\max_{i\in[1,t]} deg(p_i)$ and the \emph{weight}, $w(\langle p_j\rangle)$, is the pair $(r,d)$ such that whenever $i>r$, $deg (p_i)=0$, and $deg (p_r)=d\geq 1$.  We order weights by the lexicographic ordering, $(r,d)>(s,e)$ if $r>s$ or $r=s$ and $d>e$.

We say two such sequences $\langle p_j\rangle,\langle q_j\rangle$ are \emph{equivalent} if they have the same weight and the leading coefficients of the polynomials $p_r$, $q_r$ are the same.

The \emph{degree} of a finite collection $A:=\{\langle p_{i,j}\rangle_{j\in[1,t]}\}_{i\in [1,k]}$ of such sequences is the maximum of the degree of any of its elements.  The \emph{weight matrix}, $wm(A)$, is the matrix
\[\left(\begin{tabular}{ccc}
$N_{1,1}$&$\cdots$&$N_{1,D}$\\
$\vdots$&$\vdots$&$\vdots$\\
$N_{t,1}$&$\cdots$&$N_{t,D}$\\
\end{tabular}\right)\]
where $D$ is the degree of $A$ and $N_{s,d}$ is the number of equivalence classes with weight $(s,d)$ in the collection.

\begin{example}
  The system with $t=2$, $D=5$ given by
  \begin{eqnarray*}
&\{\{19n,0\},\{6n^2,0\},\{7n^2+19n,0\},\{7n^2,0\},\{4n^4,n^2\},
\{n^2,3n^3\},\{n^2,3n^3+2n\},\\&\{n,2n^3+3n\},\{10n^5,n^3+4n^2+4n\},\{0,n^3+2n\},\{n^5,n^3+n^2\}\}   
  \end{eqnarray*}
has weight matrix
\[\left(
  \begin{tabular}{ccccc}
1&2&0&0&0\\
0&1&3&0&0    
  \end{tabular}
\right).\]
\end{example}

We introduce an ordering on weight matrices (with $t$ and $D$ fixed): $N<M$ if for some $(r,d)$, $N_{(r',d')}=M_{(r',d')}$ for all $(r',d')>(r,d)$ and $N_{(r,d)}<M_{(r,d)}$.  It is easy to see that this is a well-ordering of order type $\omega^{t\cdot D}$, and when $M$ is a weight matrix, we will write $o(M)$ for the height of $A$ in this ordering (that is, $o(M)$ is the order-type of $\{N\mid N<M\}$).  If $A$ is a finite set of sequences of integral polynomials, we write $o(A):=o(wm(A))$.

\begin{definition}
  We say two sequences of polynomials $\langle p_i\rangle$, $\langle q_i\rangle$ are \emph{essentially distinct} if for some $i$, $p_i(n)-q_i(n)$ is not constant.
\end{definition}

Let $A$ be such a set of sequences.  For any $h$, we define a new set $\tilde A_h:=\{\langle p_j(n)\rangle, \langle p_j(n+h)-p_j(h)\rangle \mid \langle p_j(n)\rangle\in A\}$.  When $deg \langle p_j\rangle=1$, $p_j(n)=p_j(n+h)$ for every $j$.  But whenever $deg \langle p_j\rangle\geq 2$, there is at most one $h$ such that $p_j(n)=p_j(n+h)-p_j(h)$.  Note that $\tilde A_h$ has the same weight matrix as $A$.

Assume $A$, and therefore $\tilde A_h$, contains no polynomials of weight $<(1,1)$.  Let $\langle p_j\rangle\in \tilde A_h$ be chosen with minimal weight and define $A_h:=\{\langle p'_j(n)-p_j(n)\rangle\mid \langle p'_j\rangle\in \tilde A_h\}$.  Note that if $\langle p'_j\rangle\in \tilde A_h$ is not equivalent to $\langle p_j\rangle$ then $\langle p'_j-p_j\rangle$ is equivalent to $\langle p'_j\rangle$, and if $\langle p'_j\rangle$ is equivalent to $\langle p_j\rangle$ then $w(\langle p'_j-p_j\rangle)<w(\langle p'_j\rangle)$.  In particular, this means that the weight matrix of $A_h$ preceeds the weight matrix of $A$.

\begin{definition}
  If $\vec T:=T_1,\ldots,T_t$ is a sequence of transformations and $\langle p_j\rangle_{j\in[1,t]}$ is a sequence of polynomials, we write $\vec T^{\vec p}(n):=\prod_{j=1}^t T^{-p_{j}(n)}_j$.
\end{definition}

\begin{definition}
  If $\mathcal{X}$ is a dynamical system and $T_1,\ldots,T_t$ is a minimal generating set of a subgroup of $\mathbb{Z}^d$, we say $\mathcal{X}$ satisfies $T_1,\ldots,T_t,A$-SZP if for any $B$ with $\mu(B)>0$,
\[\liminf_{m\rightarrow\infty}\frac{1}{m}\sum_{i<m}\mu(\bigcap_{\vec p\in A}\vec T^{\vec p}(i)B)>0.\]

We say $\mathcal{X}$ is SZP if it is $T_1,\ldots,T_t,A$-SZP for every choice of $T_1,\ldots,T_t$ and every set $A$ of pairwise distinct sequences of length $t$ of integral-zero polynomials.
\end{definition}
Note that distinct integral-zero polynomials are essentially distinct.

\section{Approximating Weak Mixing Along Polynomials}
We recall the following technical results from \cite{avigad09}:
\begin{definition}
  A formula $\phi(\vec x,\delta)$ is \emph{continuous} in an ordinal parameter $\delta$ if whenever $\phi(\vec x,\gamma)$ for every $\gamma<\beta$, also $\phi(\vec x,\beta)$.
\end{definition}

\begin{lemma}
  Suppose $\phi_1(\vec x,\delta)$ and $\phi_2(\vec x,\delta)$ are continuous in $\delta$.  Fix $\vec x$.

Suppose that for each $i\in\{1,2\}$, for every $B>0$ there is a $\theta_i<\omega^{p_i}$ such that for every $\alpha^{\theta_i}$-sequence $s$ with $\alpha\geq\omega$ and every $f$ with $||f||_{L^\infty}\leq B$, there are a natural number $n_i$ and an $\alpha$-subsequence $t$ of $s$ such that the property
\begin{quote}
  for every $m\geq n_i$, $\phi_i(\vec x,\delta)$
\end{quote}
holds for $t$-many $\delta$.

Then for every $B>0$ there is a $\theta<\omega^{p_1+p_2-1}$ such that for every $\alpha^\theta$-sequence $s$ with $\alpha\geq\omega$ and every $f$ with $||f||_{L^\infty}\leq B$, there are a natural number $n$ and an $\alpha$-subsequence $t$ of $s$ such that the property
\begin{quote}
  for every $m\geq n$, $\phi_1(\vec x,\delta)$ and $\phi_2(\vec x,\delta)$
\end{quote}
holds for $t$-many $\delta$.
\label{tech:pair}
\end{lemma}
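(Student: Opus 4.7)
My approach is to apply the two metastability hypotheses in sequence, using ordinal multiplication to combine the quantifier depths and the continuity assumption to merge the per-interval witnesses.

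Set $\theta := \theta_1(B) \cdot \theta_2(B)$. First I would verify $\theta < \omega^{p_1+p_2-1}$. Writing each $\theta_i$ in Cantor normal form with leading term $\omega^{a_i} \cdot c_i$ and $a_i \leq p_i - 1$, the left-distributivity of ordinal multiplication together with the absorption $c_1 \cdot \omega^{b} = \omega^b$ (for $b \geq 1$) gives $\theta_1 \cdot \theta_2 < \omega^{a_1 + b_1 + 1} \leq \omega^{p_1 + p_2 - 1}$ when the leading Cantor exponent of $\theta_2$ is $\geq 1$; the degenerate case where $\theta_2$ is finite reduces to the indecomposability estimate $\theta_1 \cdot n < \omega^{p_1} \leq \omega^{p_1+p_2-1}$.

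Given an $\alpha^\theta$-sequence $s$ with $\alpha \geq \omega$, I set $\beta := \alpha^{\theta_1}$ (so $\beta \geq \omega$) and rewrite $\alpha^\theta = \beta^{\theta_2}$ using $(\alpha^{\theta_1})^{\theta_2} = \alpha^{\theta_1 \cdot \theta_2}$. Applying the $\phi_2$-hypothesis to $s$ viewed as a $\beta^{\theta_2}$-sequence yields $n_2$ and a $\beta$-subsequence $t'$ of $s$ such that $\phi_2$ (with parameter $n_2$) holds for $t'$-many $\delta$. Now I view $t'$ as an $\alpha^{\theta_1}$-sequence and apply the $\phi_1$-hypothesis, obtaining $n_1$ and an $\alpha$-subsequence $t$ of $t'$ such that $\phi_1$ (with parameter $n_1$) holds for $t$-many $\delta$. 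Because every interval of the coarser sequence $t$ is a union of intervals of the finer sequence $t'$, it still contains a $\phi_2$-witness, so $\phi_1$ and $\phi_2$ each hold for $t$-many $\delta$ separately.

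The main obstacle is that the $\phi_1$-witness and the $\phi_2$-witness in a given $t$-interval may be distinct, whereas the conjunction $\phi_1(\delta) \land \phi_2(\delta)$ asks for a single $\delta$. This is where I would invoke the continuity assumption: the sets $G_i := \{\delta \mid \forall m \geq n_i,\ \phi_i(\vec x, \delta)\}$ are closed under suprema of initial segments they contain, and in each $t$-interval we have cofinally many $\phi_2$-witnesses (one per sub-$t'$-interval) together with at least one $\phi_1$-witness. Combining these with an interleaving/refinement argument in the spirit of the witness-pairing step used at the end of the proof of Theorem~\ref{wm} (where $\omega$ consecutive levels are examined to find a pair whose conditional expectations nearly coincide) should produce a common $\delta^* \in G_1 \cap G_2$ in each $t$-interval. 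Setting $n := \max(n_1,n_2)$ then completes the argument, with the technical heart being this last merging step.
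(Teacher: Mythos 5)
You should note first that the paper does not actually prove Lemma~\ref{tech:pair}: it is recalled verbatim from \cite{avigad09}, so there is no in-paper argument to match, and your proposal has to stand on its own. Its skeleton is reasonable and presumably close to what any proof must start with: taking $\theta=\theta_1\cdot\theta_2$, checking $\theta_1\cdot\theta_2<\omega^{p_1+p_2-1}$ (your Cantor-normal-form bookkeeping is garbled --- the exponent ``$a_1+b_1+1$'' involves an undefined $b_1$ --- but the inequality itself is correct), viewing an $\alpha^{\theta}$-sequence as an $(\alpha^{\theta_1})^{\theta_2}$-sequence, applying the $\phi_2$-hypothesis with $\alpha$ instantiated at $\alpha^{\theta_1}$, and then the $\phi_1$-hypothesis to the resulting $\alpha^{\theta_1}$-subsequence. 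Up to that point everything is fine, and it does give you $\phi_1$ for $t$-many $\delta$ and $\phi_2$ for $t$-many $\delta$ \emph{separately}.

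The genuine gap is the step you yourself flag as ``the technical heart'': producing a single $\delta^*$ in each $t$-interval satisfying both properties, and the mechanism you gesture at does not do it. Continuity, as defined in the paper, only transfers $\phi$ to an ordinal $\beta$ when $\phi(\gamma)$ holds for \emph{every} $\gamma<\beta$, i.e.\ the sets $G_i$ are closed under suprema of full initial segments contained in them --- not under suprema of arbitrary (e.g.\ cofinal) families of witnesses. In a given $t$-interval you have one $\phi_1$-witness and one $\phi_2$-witness per $t'$-subinterval; the supremum of the $\phi_2$-witnesses need satisfy neither $\phi_1$ nor $\phi_2$, so continuity gives you nothing to apply it to. The appeal to the end of the proof of Theorem~\ref{wm} is also not a general principle you can borrow: there, the pairing of witnesses comes from a quantitative pigeonhole on the increments $||E(f\mid\mathcal{Y}_{\delta_{i+1}})-E(f\mid\mathcal{Y}_{\delta_i})||$ over $\omega$ consecutive witnesses, an energy argument tied to the specific formula being controlled, and no analogous quantity is available for abstract continuous $\phi_1,\phi_2$. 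So the naive composition of the two hypotheses, plus continuity, does not yield the conjunction at a common $\delta$; the merging argument has to be supplied (as in \cite{avigad09}) by a genuinely different and more careful construction, and as written your proof is incomplete precisely there.
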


\begin{lemma}
Suppose there is a $\theta<\omega^p$ such that for every $\alpha^\theta$-sequence $s$ with $\alpha\geq\omega$ and every $f$ with $||f||_{L^\infty}\leq B$, there are a natural number $n$ and an $\alpha$-subsequence $t$ of $s$ such that the property
\begin{quote}
  for every $m\geq n$, $\phi(f,m,\delta)$
\end{quote}
holds for $t$-many $\delta$.

Suppose also that $\epsilon>0$ is such that whenever $||f-f'||_{L^2}<\epsilon$ and $\phi(f,m,\delta)$ holds, also $\phi'(f',m,\delta)$ holds.  Let $\phi$ be continuous in $\delta$.  Then there is a $\theta<\omega^{2p-1}$ such that for every $\alpha^\theta$-sequence $s$ with $\alpha\geq\omega$ and every $f$ with $||f||_{L^\infty}\leq B$, there are a natural number $n$ and an $\alpha$-subsequence $t$ of $s$ such that the property
\begin{quote}
  for every $m\geq n$, $\phi'(E(f\mid\mathcal{Y}_\delta),m,\delta)$ and $\phi'(f-E(f\mid\mathcal{Y}_\delta),m,\delta)$
\end{quote}
holds for $t$-many $\delta$.
  \label{tech:split}
\end{lemma}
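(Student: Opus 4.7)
The idea is to reduce the split problem to a pair of parallel applications of the hypothesis, combined via Lemma~\ref{tech:pair}. The main challenge is that $E(f\mid\mathcal{Y}_\delta)$ depends on $\delta$, while the hypothesis provides information only about a fixed function. I resolve this by a pigeonhole argument on the monotone bounded martingale $\|E(f\mid\mathcal{Y}_\delta)\|_{L^2}^2$, producing a single $\delta_0$ whose conditional expectation $g := E(f\mid\mathcal{Y}_{\delta_0})$ uniformly approximates $E(f\mid\mathcal{Y}_\delta)$ within $\epsilon$ for $\delta$ ranging over a large subsegment of the given sequence.

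First I perform the pigeonhole. Set $K := \lceil B^2/\epsilon^2\rceil$ and partition $[0,B^2]$ into $K$ intervals of width $\epsilon^2$. The non-decreasing map $\xi \mapsto \|E(f\mid\mathcal{Y}_{s_\xi})\|_{L^2}^2$ ascends monotonically through these intervals, so the indices of the given $\alpha^{\theta'}$-sequence $s$ fall into at most $K$ contiguous subsegments, one per level. Choosing $\theta'$ suitably below $\omega^{2p-1}$---for instance $\theta' = \omega^{2p-2}\cdot(K+1)$---guarantees by a counting argument that at least one subsegment has ordinal type at least $\alpha^{\theta_0}$, where $\theta_0$ is the bound required for the later application of Lemma~\ref{tech:pair}. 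Restricting to such a subsegment $s^*$ and setting $\delta_0 := s^*_0$, monotonicity combined with orthogonality of martingale increments yields $\|E(f\mid\mathcal{Y}_\delta)-g\|_{L^2}^2 = \|E(f\mid\mathcal{Y}_\delta)\|^2 - \|g\|^2 < \epsilon^2$ for every $\delta$ in the ordinal range of $s^*$.

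Next I apply the hypothesis twice and combine. Set $h := f - g$; both $g$ and $h$ are $L^\infty$-bounded (by $B$ and $2B$ respectively). Applying the hypothesis once with $g$ as the test function and once with $h$ yields, in each case, a subsequence of $s^*$ at cost $\omega^p$ on which $\phi(g,m,\delta)$, respectively $\phi(h,m,\delta)$, holds for many $\delta$ past some threshold. Lemma~\ref{tech:pair} combines these two subsequences into a single $\alpha$-subsequence $t$ of $s^*$ and a single $n$, at total cost $\omega^{2p-1}$, on which both $\phi(g,m,\delta)$ and $\phi(h,m,\delta)$ hold simultaneously for $t$-many $\delta$ and every $m \geq n$. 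For such $\delta$, the pigeonhole bound together with the stated $\epsilon$-stability of $\phi'$ converts $\phi(g,m,\delta)\wedge\phi(h,m,\delta)$ into $\phi'(E(f\mid\mathcal{Y}_\delta),m,\delta)\wedge\phi'(f-E(f\mid\mathcal{Y}_\delta),m,\delta)$, using that $\|h-(f-E(f\mid\mathcal{Y}_\delta))\|_{L^2} = \|g - E(f\mid\mathcal{Y}_\delta)\|_{L^2} < \epsilon$.

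The main obstacle is the ordinal bookkeeping in the pigeonhole step: in an arbitrary partition of $\alpha^{\theta'}$ into $K$ contiguous pieces, the largest piece need not have ordinal type $\alpha^{\theta'}$, so I must choose $\theta'$ a bit larger than $\theta_0$ to guarantee a surviving subsegment of ordinal type at least $\alpha^{\theta_0}$. Because $K$ is finite and $\omega^{2p-1}$ absorbs finite left-multipliers, this can always be arranged while keeping $\theta' < \omega^{2p-1}$. The continuity of $\phi$ in $\delta$ is used implicitly when Lemma~\ref{tech:pair} combines the two subsequences across limit ordinals.
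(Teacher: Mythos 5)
The paper does not actually prove this lemma---it is quoted verbatim from \cite{avigad09}---so there is no in-paper argument to compare against; judged on its own, your proposal is essentially correct. The engine is the same monotone-fluctuation trick the paper itself uses inside the proof of Theorem \ref{wm}: $\delta\mapsto||E(f\mid\mathcal{Y}_\delta)||_{L^2}^2$ is nondecreasing and bounded by $B^2$, so it meets at most $K=\lceil B^2/\epsilon^2\rceil$ levels of width $\epsilon^2$, and on a long contiguous stretch the projections are $\epsilon$-stable. Your Pythagoras identity for $\delta\geq\delta_0$ is right, and it is important (and you got it right) that the stability holds for \emph{all} ordinals $\delta$ in the convex hull of $s^*$, not just its terms, since the witnesses $\delta\in(t_\beta,t_{\beta+1}]$ produced by ``$t$-many'' need not be terms of the sequence. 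Applying the hypothesis to the two fixed functions $g=E(f\mid\mathcal{Y}_{\delta_0})$ and $h=f-g$, conjoining via Lemma \ref{tech:pair} at cost $\theta_0<\omega^{2p-1}$, and then transferring to $E(f\mid\mathcal{Y}_\delta)$ and $f-E(f\mid\mathcal{Y}_\delta)$ by the assumed $\epsilon$-stability is sound, and the pigeonhole overhead is absorbed because $\omega^{2p-1}$ is additively indecomposable.

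Two small repairs are needed. First, $||h||_\infty$ can be as large as $2B$, while the hypothesis as stated fixes the bound $B$; you must either invoke the hypothesis at bound $2B$ (as in the ``for every $B$'' formulation of Lemma \ref{tech:pair}, which is how these lemmas are used in this paper) or rescale. Second, your sample value $\theta'=\omega^{2p-2}\cdot(K+1)$ need not dominate $\theta_0$, which is only known to be below $\omega^{2p-1}$; but the counting you gesture at is correct and cheaper than you suggest: if each of the at most $K$ contiguous level-classes had order type at most $\alpha^{\theta_0}$, the whole sequence would have order type at most $\alpha^{\theta_0}\cdot K<\alpha^{\theta_0+1}$ (using $\alpha\geq\omega>K$), so $\theta':=\theta_0+1<\omega^{2p-1}$ already suffices. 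Neither point affects the structure of the argument.
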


\begin{lemma}
  Suppose there is a $\theta_0<\omega^p$ such that for every $\alpha^{\theta_0}$-sequence $s$ with $\alpha\geq\omega$ and every $f$ with $||f||_{L^\infty}\leq B$, there are a natural number $n_0$ and an $\alpha$-subsequence $t$ of $s$ such that the property
  \begin{quote}
    for every $m\geq n_0$, $\phi_0(f,m,\delta)$
  \end{quote}
holds for $t$-many $\delta$.

Suppose that, additionally, for every $d$ there is a $\theta_d<\omega^q$ such that for every $\alpha^{\theta_d}$-sequence $s$ with $\alpha\geq\omega$ and every $f$ with $||f||_{L^\infty}\leq B$, there is a natural number $n_d$ and an $\alpha$-subsequence $t$ of $s$ such that the property
\begin{quote}
  for every $m\geq n_d$, $\phi_d(f,m,\delta)$
\end{quote}
holds for $t$-many $\delta$.

If $\phi_i$ is continuous in $\delta$ for each $i$ then there is a $\theta<\omega^{p+q}$ such that for every $\alpha^\theta$-sequence $s$ with $\alpha\geq\omega$ and every $f$ with $||f||_{L^\infty}\leq B$, there are an $n$, an $N$, and an $\alpha$-subsequence $t$ of $s$ such that the property
\begin{quote}
  $\phi_0(f,N,\delta)$ and for every $m\geq n$, $\phi_N(f,m,\delta)$
\end{quote}
holds for $t$-many $\delta$.
\label{tech:tech}
\end{lemma}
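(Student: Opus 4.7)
Proof plan. I would set $\beta := \alpha^{\omega^q}$ and $\theta := \omega^q \cdot \theta_0$, so that $\theta < \omega^{p+q}$ because $\theta_0 < \omega^p$, and the identity $(\alpha^{\omega^q})^{\theta_0} = \alpha^{\omega^q \cdot \theta_0} = \alpha^\theta$ lets me reinterpret any $\alpha^\theta$-sequence as a $\beta^{\theta_0}$-sequence. Given an $\alpha^\theta$-sequence $s$, I would first apply the hypothesis on $\phi_0$ at the base ordinal $\beta$ to this reinterpreted $\beta^{\theta_0}$-sequence, obtaining $n_0$ and a $\beta$-subsequence $t_0$ of $s$ along which $\phi_0(f,m,\delta)$ holds for $t_0$-many $\delta$ for every $m \geq n_0$.

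I then commit to $N := n_0$; this fixes a specific index for the second hypothesis, which now supplies an ordinal $\theta_N < \omega^q$. Because $\beta = \alpha^{\omega^q} \geq \alpha^{\theta_N}$, the sequence $t_0$ still contains an $\alpha^{\theta_N}$-subsequence $t'_0$, to which I apply the $\phi_N$-hypothesis at base $\alpha$ to obtain $n$ and an $\alpha$-subsequence $t$ of $t'_0$ along which $\phi_N(f,m,\delta)$ holds for $t$-many $\delta$ for every $m \geq n$. It then remains to show that in each $t$-block $(t_{\alpha'}, t_{\alpha'+1}]$ there is a single $\delta$ simultaneously witnessing $\phi_0(f,N,\delta)$ and $\phi_N(f,m,\delta)$ for every $m \geq n$. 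Because $t$ refines $t'_0$, which refines $t_0$, each $t$-block contains many $t_0$-sub-blocks, each carrying a $\phi_0$-witness, and the same $t$-block also carries a $\phi_N$-witness; merging these into a common $\delta$ would follow the argument used in Lemma~\ref{tech:pair}, where the assumed continuity in $\delta$ of both formulas is used to promote a cofinal family of $\phi_0$-witnesses inside a block into satisfaction at a limit ordinal that also supports $\phi_N$.

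The main obstacle is precisely this final alignment. The two hypotheses produce witnesses at a priori unrelated ordinals, and it is the continuity assumption that does the heavy lifting in turning many-in-each-$t_0$-sub-block $\phi_0$-witnesses and a lone $\phi_N$-witness into a single common $\delta$; the rest is bookkeeping. The subtle piece of that bookkeeping is that $N$ is only revealed after the first stage, which is why the first stage must be run at base $\alpha^{\omega^q}$ rather than at $\alpha$: this absorbs the subsequent $\alpha^{\theta_N}$-drop uniformly in $N$ (any $\theta_N < \omega^q$ will fit inside $\alpha^{\omega^q}$), and keeps the total ordinal expense at $\omega^q \cdot \theta_0 < \omega^{p+q}$, as required.
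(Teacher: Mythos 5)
Your ordinal bookkeeping is sound, and the high-level structure is what one would expect: applying the $\phi_0$-hypothesis at the inflated base $\beta=\alpha^{\omega^q}$ so that the resulting $\beta$-subsequence $t_0$ is long enough to absorb the subsequent $\alpha^{\theta_N}$-drop once $N:=n_0$ is revealed, and the identity $(\alpha^{\omega^q})^{\theta_0}=\alpha^{\omega^q\cdot\theta_0}$ together with $\omega^q\cdot\theta_0<\omega^{q+p}=\omega^{p+q}$ keeps the budget within bounds. (Note also that since the paper only \emph{cites} this lemma and Lemma~\ref{tech:pair} from \cite{avigad09} without reproducing the proofs, there is no in-paper argument to compare against; your proposal must stand on its own.)

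The genuine gap is the alignment step, which you correctly identify as the obstacle but then dispatch by appeal to the argument ``used in Lemma~\ref{tech:pair},'' described as promoting a cofinal family of $\phi_0$-witnesses inside a $t$-block to satisfaction at a limit ordinal that also carries a $\phi_N$-witness. This does not close the gap, for two concrete reasons. First, the paper's notion of continuity is the weak one: $\phi(\vec x,\beta)$ follows only from $\phi(\vec x,\gamma)$ holding for \emph{every} $\gamma<\beta$, not merely for a cofinal set of $\gamma$. After the first application you have one $\phi_0$-witness per $t_0$-sub-block, which is a cofinal but very sparse set below each $t$-block endpoint; the stated continuity hypothesis does not let you pass to the limit from that. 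Second, even granting cofinal continuity, you have not explained why the limit ordinal at which $\phi_0$ is promoted would coincide with (or otherwise produce) a $\delta$ at which ``for all $m\ge n$, $\phi_N(f,m,\delta)$'' holds: the $\phi_N$-witness delivered by the second application lives somewhere in $(t_{\gamma'},t_{\gamma'+1}]$ with no a priori relation to that limit. The alignment mechanism that does work in this paper's own arguments (see the proof of Theorem~\ref{wm}, where $\omega$-many consecutive sub-blocks and boundedness of the martingale $E(f\mid\mathcal Y_\delta)$ are used to find a witness with small increment) is of a different character and requires its own ordinal overhead; you would need to spell out an analogous device and verify that $\omega^q\cdot\theta_0$, plus whatever multiplicative factor the alignment costs, still sits strictly below $\omega^{p+q}$.
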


Recall that if $\mathcal{X}$ is a measure-preserving system and $\mathcal{Y}$ is a factor, $\mathcal{X}\times_{\mathcal{Y}}\mathcal{X}$ is again a measure-preserving system with factor $\mathcal{Y}$.  $L^\infty(\mathcal{Y})$ can be identified as a subset of $L^\infty(\mathcal{X}\times_{\mathcal{Y}}\mathcal{X})$, and if $f$ and $g$ are elements of $L^\infty(\mathcal{X})$ then $f\otimes g$ is an element of $L^\infty(\mathcal{X}\times_{\mathcal{Y}}\mathcal{X})$.  Thus, the most basic elements of $L^\infty(\mathcal{X}\times_{\mathcal{Y}}\mathcal{X})$ can be viewed as tensor products of elements of $L^\infty(\mathcal{X})$.  We define the \emph{simple} elements of $L^\infty(\mathcal{X}\times_{\mathcal{Y}}\mathcal{X})$ to be those that can be represented as finite sums of such basic elements.  The advantage to focusing on simple elements is that if $f$ is such an element then $f$ can be viewed as an element of $L^\infty(\mathcal{X}\times_{\mathcal{Y}^{k,\eta}_\delta(\mathcal{Z})}\mathcal{X})$ for each $k,\eta,\mathcal{Z},\delta$ simultaneously.

More precisely, we define $L^\infty_0(\mathcal{X}\times\mathcal{X})$ to be the set of finite formal sums of such basic elements; then each element $f$ of $L^\infty_0(\mathcal{X}\times\mathcal{X})$ denotes an element of $L^\infty(\mathcal{X}\times_{\mathcal{Y}}\mathcal{X})$ for any $\mathcal{Y}$.  Note that if $f$ and $g$ are elements of $L^\infty_0(\mathcal{X}\times\mathcal{X})$ it makes sense to talk about $f+g$ and $E(f\mid\mathcal{Y})$ as elements of $L^\infty_0(\mathcal{X}\times_{\mathcal{Y}}\mathcal{X})$.  We may define an $L^\infty$ bound of such a formal sum in the natural way, taking $||\sum_{i<n}c_if_i\otimes g_i||_{\infty}=\sum_{i<n}c_i||f_i||_{\infty}||g_i||_{\infty}$.  Such a bound is an upper bound for the true $L^\infty$ bound in $L^\infty(\mathcal{X}\times_{\mathcal{Y}}\mathcal{X})$ and respects the usual properties of the $L^\infty$ norm with respect to sums and products.

Using this, we can generalize Theorem \ref{wm} to the relative square $\mathcal{X}\times_{\mathcal{Y}_\delta}\mathcal{X}$; we could go further, extending to the relative square of the relative square, and so on, but we will not need to do so here.
\begin{lemma}
For every $\epsilon>0$ and $B>0$, there is a natural number $K$ such that for every $\Gamma\subseteq\mathbb{Z}^d$, every $T\in\mathbb{Z}^d\setminus\Gamma$, every tower $(\mathcal{Y}_\delta)_{\delta<\eta}$ of $\Gamma\cup\{T\}$-compact extensions, every $\alpha\geq\omega$, every $\alpha^K$-sequence $s$, and every $f,g\in L_0^\infty(Z_\Gamma(\mathcal{Y}_0)\times Z_\Gamma(\mathcal{Y}_0))$ with $||f||_\infty,||g||_\infty\leq B$, there are an $n$ and an $\alpha$-subsequence $t$ of $s$ such that the property
  \begin{quote}
    for every $m\geq n$, $\frac{1}{m}\sum_{i<m}\int\left[E(f(T^{-i}\otimes T^{-i})g\mid\mathcal{Y}_\delta)-E(f\mid\mathcal{Y}_\delta)(T^{-i}\otimes T^{-i}) E(g\mid\mathcal{Y}_\delta)\right]^2d\mu<\epsilon$
  \end{quote}
holds for $t$-many $\delta$.
\label{wm2}
\end{lemma}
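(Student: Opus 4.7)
The plan is to mimic the proof of Theorem \ref{wm} essentially verbatim, but interpret every quantity inside the relative square $\mathcal{X}\times_{\mathcal{Y}_\delta}\mathcal{X}$ with diagonal action $T\otimes T$. Setting $h_\delta := f - E(f\mid\mathcal{Y}_\delta)$ as an element of $L^\infty_0(\mathcal{X}\times\mathcal{X})$, the same algebraic computation recasts the integral in the lemma as $\int h_\delta \cdot (H^{m,T\otimes T}_g \ast_{\mathcal{Y}_\delta} h_\delta)\, d(\mu\times_{\mathcal{Y}_\delta}\mu)$, after which the integrand is split into the same three pieces used in Theorem \ref{wm}: a mean-ergodic-error piece $h_\delta\cdot((H^{m,T\otimes T}_g - H^{T\otimes T}_g)\ast_{\mathcal{Y}_\delta}h_\delta)$, an increment piece $(h_\delta - h_{\delta+1})\cdot(H^{T\otimes T}_g\ast_{\mathcal{Y}_\delta} h_{\delta+1})$, and an orthogonality piece $h_{\delta+1}\cdot(H^{T\otimes T}_g\ast_{\mathcal{Y}_\delta} h_\delta)$.

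For the first two pieces the bookkeeping is identical to Theorem \ref{wm}. The metastable Mean Ergodic Theorem (Lemma \ref{ergodic}), applied to the action $T\otimes T$ on the relative square, gives the uniform convergence of $H^{m,T\otimes T}_g\ast_{\mathcal{Y}_\delta}h$ to $H^{T\otimes T}_g\ast_{\mathcal{Y}_\delta}h$; because $\|\cdot\|_{L^\infty_0}$ dominates the $L^\infty$ norm of an element in every relative square simultaneously, the constant $K$ provided by Lemma \ref{ergodic} does not depend on $\delta$. The usual refinement from an $\omega\cdot\alpha$-subsequence to an $\alpha$-subsequence (doubling $K$) then produces, for $t$-many $\delta$, both the mean-ergodic control on the first piece and an increment $\|h_\delta - h_{\delta+1}\|_{L^2} < \epsilon/2B^2$ that makes the second piece small, exactly as before.

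The substantive new step is the orthogonality piece. Because $g$ is a formal sum of tensor products of functions in $Z_\Gamma(\mathcal{Y}_0)\subseteq Z_\Gamma(\mathcal{Y}_\delta)$, and because tensor products of functions compact relative to a common base are compact in the relative square over that base (under the diagonal action), $g$ is $\Gamma$-compact relative to $\mathcal{Y}_\delta$ inside $\mathcal{X}\times_{\mathcal{Y}_\delta}\mathcal{X}$. Consequently $H^{T\otimes T}_g\ast_{\mathcal{Y}_\delta}h_\delta$ is simultaneously $\Gamma$-compact (as an $L^2$-limit of sums of $\mathcal{Y}_\delta$-functions multiplied by $\Gamma$-translates of $g$) and $T$-compact (by its construction as a $T\otimes T$-average), hence $(\Gamma\cup\{T\})$-compact relative to $\mathcal{Y}_\delta$ in the relative square. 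By the standard description of compact extensions in a relative product (see \cite{FurstenbergBook}), this places $H^{T\otimes T}_g\ast_{\mathcal{Y}_\delta}h_\delta$ inside $L^2(\mathcal{Y}_{\delta+1}\times_{\mathcal{Y}_\delta}\mathcal{Y}_{\delta+1})$, to which $h_{\delta+1}$ is orthogonal; the third integral therefore vanishes.

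The main obstacle I expect is making precise this final identification: the compact extension of $\mathcal{Y}_\delta$ inside $\mathcal{X}\times_{\mathcal{Y}_\delta}\mathcal{X}$ under the $(\Gamma\cup\{T\})$-action must be contained in the relatively independent join $\mathcal{Y}_{\delta+1}\times_{\mathcal{Y}_\delta}\mathcal{Y}_{\delta+1}$, invoking the tower hypothesis $Z_{\Gamma\cup\{T\}}(\mathcal{Y}_\delta)\subseteq\mathcal{Y}_{\delta+1}$ together with the standard fact that tensor products of compact functions span the compact functions in the relative product. Once that is in place, the remaining constants and the value of $K$ (after doubling) read off directly from the proof of Theorem \ref{wm}, with no genuinely new estimate required.
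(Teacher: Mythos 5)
Your plan -- rerunning the proof of Theorem \ref{wm} verbatim inside $\mathcal{X}\times_{\mathcal{Y}_\delta}\mathcal{X}$ with the diagonal action $T\otimes T$ -- is not the paper's argument, and as written it has a genuine gap that is not just bookkeeping: the ambient measure space itself changes with $\delta$. The metastable Mean Ergodic Theorem (Theorem \ref{ergodic}) is a statement about a single fixed system $\mathcal{X}$ equipped with an increasing tower of factors; its proof rests on counting fluctuations of a monotone quantity in one fixed Hilbert space $L^2(\mathcal{X},\mu)$. In your setting the norm being controlled at stage $\delta$ is the norm of $L^2(\mu\times_{\mathcal{Y}_\delta}\mu)$, a different Hilbert space for each $\delta$, so you cannot simply ``apply Lemma \ref{ergodic} to the action $T\otimes T$ on the relative square''; your remark that the formal $L^\infty_0$ bound dominates the $L^\infty$ norm in every relative square addresses uniformity in $g$, not this issue. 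The same problem infects the increment step: the pigeonhole producing some $i$ with $\|h_{\delta_i+1}-h_{\delta_i}\|<\epsilon/2B^2$ in Theorem \ref{wm} uses that the increments $E(f\mid\mathcal{Y}_{\delta_i+1})-E(f\mid\mathcal{Y}_{\delta_i})$ are orthogonal in the fixed space $L^2(\mu)$, so their squared norms telescope against $\|f\|^2$; measured in the $\delta$-dependent norms of $\mu\times_{\mathcal{Y}_\delta}\mu$ there is no such martingale bound without a further argument. (The orthogonality piece also needs care: you must take $E(f\mid\mathcal{Y}_{\delta+1})$ to mean projection onto $\mathcal{Y}_{\delta+1}\times_{\mathcal{Y}_\delta}\mathcal{Y}_{\delta+1}$, not onto the diagonal copy of $\mathcal{Y}_{\delta+1}$, and then invoke the standard fact that compact functions of the relative product are spanned by tensor products of compact functions; that part is repairable, but it is extra machinery the paper never needs.)

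The paper's proof avoids the relative square entirely at the analytic level. Using Lemma \ref{tech:split} and subadditivity it reduces to $f=f_1\otimes f_2$, $g=g_1\otimes g_2$ with each $E(f_i\mid\mathcal{Y}_\delta)$ equal to $0$ or to $f_i$ (the all-$f_i$ case being trivial), and then uses the factorization of conditional expectation over the relatively independent product, $E\bigl(f_1\otimes f_2\,(T^{-i}\otimes T^{-i})(g_1\otimes g_2)\mid\mathcal{Y}_\delta\bigr)=E(f_1T^{-i}g_1\mid\mathcal{Y}_\delta)\,E(f_2T^{-i}g_2\mid\mathcal{Y}_\delta)$, to rewrite the relative-square average as an integral over $\mathcal{X}$ alone. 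Theorem \ref{wm} is then applied twice, to $(f_1,g_1)$ and $(f_2,g_2)$, and the two metastable conclusions are combined at a common $\delta$ via Lemma \ref{tech:pair}; the limit expression vanishes because some $E(f_i\mid\mathcal{Y}_\delta)=0$. If you want to salvage your route, you would first have to prove a version of Theorem \ref{ergodic} (and of the increment pigeonhole) for the family of spaces $L^2(\mu\times_{\mathcal{Y}_\delta}\mu)$ with $\delta$ varying along the tower, which is precisely the work the paper's reduction to simple tensors is designed to sidestep.
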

\begin{proof}
  By Lemma \ref{tech:split} and the subadditivity of the left hand side, it suffices to consider the cases where $f=f_1\otimes f_2$, $g=g_1\otimes g_2$, and either $E(f_i\mid\mathcal{Y}_\delta)=0$ or $E(f_i\mid\mathcal{Y}_\delta)=f_i$ for each $i\in\{1,2\}$.    When $E(f_i\mid\mathcal{Y}_\delta)=f_i$ for both $i=1$ and $i=2$, the claim is trivial, so we may further assume that for some $i\in\{1,2\}$, $E(f_i\mid\mathcal{Y}_\delta)=0$.  By Theorem \ref{wm} and Lemma \ref{tech:pair}, for any $\epsilon'>0$, we can find $K$ large enough so that every $\alpha^K$-sequence $s$ has an $n$ and an $\alpha$-subsequence $t$ such that 
  \begin{quote}
    for all $m\geq n$, both 
    \[\frac{1}{m}\sum_{i<m}\int\left[E(f_1T^{-i}g_1\mid\mathcal{Y}_\delta)-E(f_1\mid\mathcal{Y}_\delta)T^{-i} E(g_1\mid\mathcal{Y}_\delta)\right]^2d\mu<\epsilon\]
and
    \[\frac{1}{m}\sum_{i<m}\int\left[E(f_2T^{-i}g_2\mid\mathcal{Y}_\delta)-E(f_2\mid\mathcal{Y}_\delta)T^{-i} E(g_2\mid\mathcal{Y}_\delta)\right]^2d\mu<\epsilon\]

  \end{quote}
holds for $t$-many $\delta$.  But then, for such $\delta$ and $m\geq n$,
\[\frac{1}{m}\sum_{i<m}\int \left[E(f_1\otimes f_2(T^{-i}g_1\otimes T^{-i}g_2)\mid\mathcal{Y}_\delta)\right]^2d\mu\times_{\mathcal{Y}_\delta}\mu=\]
\[\frac{1}{m}\sum_{i< m}\int \left[E(f_1T^{-i}g_1\mid\mathcal{Y}_\delta)E(f_2T^{-i}g_2\mid\mathcal{Y}_\delta)\right]^2d\mu\]
is close to
\[\frac{1}{m}\sum_{i<m}\int \left[E(f_1\mid\mathcal{Y}_\delta)T^{-i}E(g_1\mid\mathcal{Y}_\delta)E(f_2\mid\mathcal{Y}_\delta)T^{-i}E(g_2\mid\mathcal{Y}_\delta)\right]^2d\mu\]
which is $0$ since either $E(f_1\mid\mathcal{Y}_\delta)=0$ or $E(f_2\mid\mathcal{Y}_\delta)=0$
\end{proof}

We need to generalize Theorem \ref{wm} to arbitrary polynomials of degree $1$.
\begin{lemma}
For every $\epsilon>0$ and $B>0$, there is a natural number $K$ such that for every $\Gamma\subseteq\mathbb{Z}^d$, every linearly independent $T_1,\ldots,T_t\in\mathbb{Z}^d\setminus\Gamma$, every tower $(\mathcal{Y}_\delta)_{\delta<\eta}$ of $\Gamma^+$-compact extensions, every $\alpha\geq\omega$, every $\alpha^K$-sequence $s$, every sequence of polynomials $p_j$ of degree $1$, and every $f,g\in L^\infty(Z_\Gamma(\mathcal{Y}_0))$ with $||f||_\infty,||g||_\infty\leq B$, there are an $n$ and an $\alpha$-subsequence $t$ of $s$ such that the property
  \begin{quote}
    for every $m\geq n$, $\frac{1}{m}\sum_{i<m}\int\left[E(f\vec T^p(i)g\mid\mathcal{Y}_\delta)-E(f\mid\mathcal{Y}_\delta)\vec T^p(i) E(g\mid\mathcal{Y}_\delta)\right]^2d\mu<\epsilon$\\
  \end{quote}
holds for $t$-many $\delta$.
\label{wmgen}
\end{lemma}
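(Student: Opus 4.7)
The plan is to reduce Lemma \ref{wmgen} directly to Theorem \ref{wm} by exploiting that degree-1 polynomial actions are essentially single-transformation actions. Writing each $p_j(i) = a_j i + b_j$ with $a_j,b_j \in \mathbb{Z}$, I set $T_0 := \prod_{j=1}^t T_j^{a_j}$ and $C := \prod_{j=1}^t T_j^{-b_j}$, so that $\vec T^p(i) = T_0^{-i} C$. Since the sequence has degree 1 at least one $a_j \neq 0$, and combined with the linear independence of $T_1,\ldots,T_t$ modulo $\Gamma$ (as the hypothesis on $\Gamma^+$-compact towers implicitly requires), this forces $T_0 \notin \Gamma$. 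Consequently the $\Gamma^+$-compact tower is, in particular, a $\Gamma \cup \{T_0\}$-compact tower, satisfying the hypothesis of Theorem \ref{wm}.

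Next I would introduce $g' := Cg$ as the effective second function. Because $\mathbb{Z}^d$ is abelian and $Z_\Gamma(\mathcal{Y}_0)$ is stable under the $\mathbb{Z}^d$-action, $g'$ is again $\Gamma$-compact relative to $\mathcal{Y}_0$ with $\|g'\|_\infty = \|g\|_\infty \leq B$. Moreover each $\mathcal{Y}_\delta$ is $\mathbb{Z}^d$-invariant, so the conditional expectation commutes with $C$, giving $\vec T^p(i) E(g\mid\mathcal{Y}_\delta) = T_0^{-i} E(g'\mid\mathcal{Y}_\delta)$. The target inequality therefore rewrites as
\[
\frac{1}{m}\sum_{i<m}\int\left[E(f T_0^{-i} g' \mid \mathcal{Y}_\delta) - E(f\mid\mathcal{Y}_\delta) T_0^{-i} E(g'\mid\mathcal{Y}_\delta)\right]^2 d\mu < \epsilon,
\]
which is exactly the conclusion of Theorem \ref{wm} applied with $T := T_0$, the same $\Gamma$, the given tower, and the pair $(f, g')$.

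I then take $K$ to be the constant produced by Theorem \ref{wm} for $\epsilon$ and $B$. Given any $\alpha^K$-sequence $s$, Theorem \ref{wm} yields an $n$ and an $\alpha$-subsequence $t$ on which the desired property holds for $t$-many $\delta$, completing the proof.

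The main subtlety is not analytic but structural: verifying that the factorization $\vec T^p(i) = T_0^{-i} C$ interacts correctly with the compactness hypotheses. Specifically, I must check that the $\mathbb{Z}^d$-invariance of $Z_\Gamma(\mathcal{Y}_0)$ transports $\Gamma$-compactness from $g$ to $g'$, and that linear independence of the generators modulo $\Gamma$ (rather than merely in $\mathbb{Z}^d$) ensures $T_0 \notin \Gamma$ so that the $\Gamma^+$-compact tower actually supplies the $\Gamma \cup \{T_0\}$-compactness required by Theorem \ref{wm}. Once these two points are in hand, the lemma is essentially a renaming of Theorem \ref{wm}.
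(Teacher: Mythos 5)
Your proof is correct and follows essentially the same route as the paper, which simply observes that a degree-1 system gives $\vec T^p(i)=\bigl(T_1^{c_1}\cdots T_t^{c_t}\bigr)^{i}$ (up to a constant shift) and invokes Theorem \ref{wm} for that single transformation, using that a $\Gamma^+$-compact tower is in particular $\Gamma\cup\{T_0\}$-compact. Your extra care with the constant terms (absorbing $C$ into $g'=Cg$) and with $T_0\notin\Gamma$ only makes explicit points the paper leaves implicit.
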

\begin{proof}
  Since $p$ has degree $1$, $\vec T^p(i)$ has the form $T_1^{c_1i}\cdots T_t^{c_ti}$ for some $c_1,\ldots,c_t$ not all $0$.  So we may apply Theorem \ref{wm} to $T_1^{c_1}\cdots T_t^{c_t}$.
\end{proof}
By the same argument, we obtain the analogous version of Theorem \ref{wm2}.

The following is our main theorem; we show that for any system $A$ of polynomials, there is a height such that all towers of that height contain many levels which are ``almost weak mixing'' along the system $A$.
\begin{theorem}
For every $\epsilon>0$, $B>0$, and $\gamma<\omega^\omega$, there are polynomials $q,q'$ such that for every integer $D$, there is an ordinal $\theta<\omega^{\gamma\cdot q(D)+q'(D)}$ such that for every $\Gamma\subseteq\mathbb{Z}^d$, every linearly independent $T_1,\ldots,T_t\in\mathbb{Z}^d\setminus\Gamma$, every tower $(\mathcal{Y}_\delta)_{\delta<\eta}$ of $\Gamma^+$-compact extensions, every system of essentially distinct non-constant integral polynomials $A$ with $|A|\leq D$, $o(A)\leq\gamma$, every $\{f_p\}_{p\in A}$ in $L^\infty(Z_\Gamma(\mathcal{Y}_0))$ with $||f_p||_\infty\leq B$ for each $p\in A$, every $\alpha\geq\omega$, every $\alpha^\theta$-sequence $s$, there are an $n$ and an $\alpha$-subsequence $t$ of $s$ such that
  \begin{quote}
    for every $m\geq n$, $||\frac{1}{m}\sum_{i<m}\left(\prod_{p\in A}\vec T^p(i)f_p - \prod_{p\in A}\vec T^p(i)E(f_p\mid\mathcal{Y}_\delta)\right)||<\epsilon$
  \end{quote}
holds for $t$-many $\delta$.
\label{polywm}
\end{theorem}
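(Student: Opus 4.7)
The plan is to proceed by PET induction on $o(A)\leq\gamma<\omega^\omega$, following the strategy of \cite{bergelson96} but tracking ordinal bounds via Lemmas \ref{tech:pair}, \ref{tech:split}, and \ref{tech:tech}. The first reduction expands
\[\prod_{p\in A}\vec T^p(i)f_p - \prod_{p\in A}\vec T^p(i)E(f_p\mid\mathcal Y_\delta)\]
as a telescoping sum; by Lemma \ref{tech:split} and subadditivity it suffices to bound $\|\frac{1}{m}\sum_{i<m}\prod_p \vec T^p(i)g_p\|$ on the assumption that some designated $g_{p_*}$ satisfies $E(g_{p_*}\mid\mathcal Y_\delta)=0$.

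The base case handles $A$ consisting of degree-$1$ sequences, so that each $\vec T^{\vec p}(i)$ is of the form treated in Lemma \ref{wmgen}. The $|A|$-fold product is dealt with by peeling off one factor at a time, applying Lemma \ref{wmgen} (or its tensor-product analogue from Lemma \ref{wm2}) to the outer factor in the relative square $\mathcal X\times_{\mathcal Y_\delta}\mathcal X$, and combining the resulting metastable estimates with Lemmas \ref{tech:pair} and \ref{tech:split}. This yields a base-case bound of the form $\omega^{q'(D)}$ with $q'$ polynomial in $D\geq|A|$.

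For the inductive step at $o(A)=\gamma>0$, I would apply the metastable form of van der Corput,
\[\Big\|\tfrac{1}{m}\sum_{i<m}u_i\Big\|^2 \leq \tfrac{1}{H}\sum_{h<H}\Big|\tfrac{1}{m}\sum_{i<m}\langle u_i, u_{i+h}\rangle\Big| + O(H/m),\]
with $u_i=\prod_p\vec T^p(i)g_p$. Expanding $\langle u_i,u_{i+h}\rangle$ as an integral and shifting by $\vec T^{\vec p_0(i)}$ for a minimal-weight $\vec p_0\in\tilde A_h$ (using invariance of $\mu$) rewrites it as an integral of an average along the system $A_h$ of simple elements of $L^\infty_0(\mathcal X\times\mathcal X)$, interpreted in $L^\infty(\mathcal X\times_{\mathcal Y_\delta}\mathcal X)$. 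Since $o(A_h)<o(A)\leq\gamma$, the inductive hypothesis applies in the relative square (using the analogue of Lemma \ref{wm2}). Lemma \ref{tech:tech} then combines the outer choice of $H$ with the inner $A_h$-bounds in exactly the required pattern: first fix $H$ to control the van der Corput error, then run the $A_h$-bound uniformly in $h<H$.

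The principal obstacle is the ordinal bookkeeping. Passage from $A$ to $\tilde A_h$ at most doubles the size, so the argument to $q$ scales by a factor of $2$ at each inductive step; but the induction proceeds for at most $\gamma<\omega^\omega$ nested steps, so polynomial growth of $q$ suffices. Concretely, combining Lemma \ref{tech:tech}'s output $\omega^{q''(D)}$ (for the choice of $H$) with the inductive bound $\omega^{\gamma'\cdot q(2D)+q'(2D)}$ for some $\gamma'<\gamma$ produces $\omega^{\gamma'\cdot q(2D)+q'(2D)+q''(D)}$, which must be absorbed into $\omega^{\gamma\cdot q(D)+q'(D)}$. This absorption is pure transfinite ordinal arithmetic exploiting $\gamma'<\gamma$ and Cantor normal form, and it determines how quickly $q$ and $q'$ must grow with $D$; the explicit polynomials can be read off from the recursion.
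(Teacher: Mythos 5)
Your skeleton is the paper's: induction on $o(A)$, reduction via Lemma \ref{tech:split} to the case where a designated factor $f'_{p_*}$ has $E(f'_{p_*}\mid\mathcal{Y}_\delta)=0$, van der Corput, passage to $\tilde A_h$ and then $A_h$ by dividing out a minimal-weight element, and Lemma \ref{tech:tech} to couple the choice of $H$ with bounds uniform in $h<H$. But there is a genuine gap in your inductive step. After applying the induction hypothesis along $A_h$, the main term is $\frac{1}{m}\sum_{i<m}\int E(\tilde f_{p_0}\mid\mathcal{Y}_\delta)\prod_{p\in A_h}\vec T^p(i)E(\tilde f_p\mid\mathcal{Y}_\delta)\,d\mu$, and this vanishes only if one of the new functions has zero conditional expectation. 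That is automatic when $p_*$ has degree $\geq 2$ (the unshifted copy of $f'_{p_*}$ survives into $A_h$), but when $p_*$ has degree $1$ the two copies merge and the relevant function is $f'_{p_*}\vec T^{p_*}(h)f'_{p_*}$, whose conditional expectation need not vanish. The paper closes this case with a second estimate ($\dagger_2$), obtained from Lemma \ref{wmgen} and arranged to hold at the \emph{same} levels $\delta$ via Lemmas \ref{tech:pair} and \ref{tech:tech}: the average over $h$ of $\|E(f'_{p_*}\vec T^{p_*}(h)f'_{p_*}\mid\mathcal{Y}_\delta)\|_{L^2}$ is close to $\|E(f'_{p_*}\mid\mathcal{Y}_\delta)\|^2=0$. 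Your proposal has no counterpart: you quarantine degree-$1$ behaviour into a separate all-linear base case, but in a mixed system the only zero-expectation factor may sit on a linear polynomial, and then your step does not close. Your base case is itself not an argument: for several essentially distinct linear sequences, ``peeling off one factor at a time'' with the two-function Lemma \ref{wmgen} does not bound the multi-factor average; such systems still require the van der Corput/PET descent (their weight matrices are not minimal), which is why the paper runs a single induction with no separate base case, letting Lemma \ref{wmgen} enter only through $(\dagger_2)$.

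Second, you invoke the relative square in the wrong place. The van der Corput inner products $\langle u_i,u_{i+h}\rangle$ are integrals over $\mathcal{X}$ of pointwise products; after shifting by $\vec T^{p_0}(i)$ one has $\int\tilde f_{p_0}\prod_{p\in A_h}\vec T^p(i)\tilde f_p\,d\mu$ with the $\tilde f_p$ ordinary elements of $L^\infty(Z_\Gamma(\mathcal{Y}_0))$, so the induction hypothesis is applied in $\mathcal{X}$ itself. Reading these as simple elements of $L^\infty_0(\mathcal{X}\times\mathcal{X})$ integrated over $\mathcal{X}\times_{\mathcal{Y}_\delta}\mathcal{X}$ computes the wrong quantity: $\int F\otimes G\,d\mu\times_{\mathcal{Y}_\delta}\mu=\int E(F\mid\mathcal{Y}_\delta)E(G\mid\mathcal{Y}_\delta)\,d\mu$, which is not $\int FG\,d\mu$, and likewise $\|F\|^2_{L^2(\mu)}$ is not an integral over the relative square. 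The tensor formalism is needed only for the separate relative-square analogue of this theorem (proved by the same argument with Lemma \ref{wm2} replacing Theorem \ref{wm}), which is later applied to $f_p\otimes f_p$ in the corollary to convert norm bounds into bounds on conditional expectations. Your ordinal bookkeeping (polynomial $q,q'$ in $D$, absorbing the cost of Lemma \ref{tech:tech} and the doubling $D\mapsto 2D$ using $\gamma'<\gamma$ and Cantor normal form) is in the right spirit, but it only becomes meaningful once these two points are repaired.
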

\begin{proof}
  We proceed by induction on $\gamma$.  Since $||\cdot||$ is subadditive, it suffices to split the key formula into $2^D$ cases, where each $f_p$ is replaced by either $E(f_p\mid\mathcal{Y}_\delta)$ or $f_p-E(f_p\mid\mathcal{Y}_\delta)$.  By Lemma \ref{tech:split}, the claim will follow from the claim for each of these simpler cases.  So it suffices to work with the property 
  \begin{quote}
    for every $m\geq n$, $||\frac{1}{m}\sum_{i<m}\left(\prod_{p\in A}\vec T^p(i)f'_p - \prod_{p\in A}\vec T^p(i)E(f'_p\mid\mathcal{Y}_\delta)\right)||<\epsilon$\label{eq:pwm}
  \end{quote}
where the $f'_p$ are terms of the form $E(f_p\mid\mathcal{Y}_\delta)$ or $f_p-E(f_p\mid\mathcal{Y}_\delta)$.

When every $f'_p$ is the term $E(f_p\mid\mathcal{Y}_\delta)$, this is clearly true for all $\delta$, so we may reduce to the property (**):
  \begin{quote}
    for every $m\geq n$, $||\frac{1}{m}\sum_{i<m}\prod_{p\in A}\vec T^p(i)f'_p||<\epsilon$\label{eq:pwms}
  \end{quote}
where $f'_{p_*}$ is $f_{p_*}-E(f_{p_*}\mid\mathcal{Y}_\delta)=0$ for some $p_*\in A$.

So let $\epsilon>0$, $B$, $\gamma$ be given.  Recall the set $A_h$ with $o(A_h)<o(A)$ (relative to any choice of $p_0\in\tilde A_h$ with $p_0$ of minimal weight); for $p\in A_h$, define $\tilde f_p$ to be one of $f'_p$, $\vec T^{p'}(h)f'_{p'}$, or $f'_{p'}\vec T^{p'}f_{p'}$ for some $p'\in A$ (the correct choice is directed by the calculations below).  There are two slightly different cases, depending on whether $p_*$ has degree $1$; if not, we may apply IH to obtain a $\theta$ so that, given $\{f_p\}$ and an $\alpha^\theta$-sequence $s$, there is a subsequence $t$, an $H$ large enough (see below), and an $N$ such that
\begin{quote}
for every $m\geq N$ and $h\in[-H,H]$, 
 \begin{equation*}
||\frac{1}{m}\sum_{i<m}\left(\prod_{p\in A_h}\vec T^p(n)\tilde f_p-\prod_{p\in A_h}\vec T^p(n)E(\tilde f_p\mid\mathcal{Y}_\delta)\right)||<\epsilon/2
\tag{$\dagger_1$}
\end{equation*}
\end{quote}
holds for $t$-many $\delta$, and in this case.  If $p_*$ does have degree $1$, we apply Lemma \ref{tech:tech} to Lemma \ref{wmgen} and IH, and obtain the property that for $t$-many $\delta$, $(\dagger_1)$ holds and additionally
\begin{quote}
  \begin{equation*}
\frac{1}{H}\sum_{h=1-H}^{H-1}\int\left[E(f_{p_*}\vec T^{p_*}(h)f_{p_*}\mid\mathcal{Y}_\delta)-E(f_{p_*}\mid\mathcal{Y}_\delta)\vec T^{p_*}(r)E(f_{p_*}\mid\mathcal{Y}_\delta)\right]^2 d\mu<\epsilon/2B^{4|A|}
\tag{$\dagger_2$}
\end{equation*}
\end{quote}
(The bounds from $1-H$ to $H-1$, instead of $0$ to $H-1$, are insignificant, since we can bound the positive and negative halves simultaneously using Lemma \ref{tech:split}, and so their sum is also bounded.)

We claim that whenever these conditions hold at $\delta$, (**) holds as well for $n:=\max\{N,cH\}$ where $c$ is an integer chosen large relative to $\epsilon$ and $B$.  To see this, observe that
\begin{eqnarray*}
  ||\frac{1}{m}\sum_{i<m}\prod_{p\in A}\vec T^p(i)f'_p||^2
&=||\frac{1}{m}\sum_{i<m}\frac{1}{H}\sum_{h<H}\prod_{p\in A}\vec T^p(i+h)f'_p||^2+\Psi'_H\\
&\leq\frac{1}{m}\sum_{i<m}\int\frac{1}{H^2}\sum_{h,h'<H}\prod_{p\in A}\vec T^p(i+h)f'_p \vec T^p(i+h')f'_p d\mu+\Psi'_H\\
&\leq\frac{1}{H}\sum_{h=1-H}^{H-1}\left(1-\frac{|h|}{H}\right)\frac{1}{m}\sum_{i<m}\int \prod_{p\in A}\vec T^p(i)f'_p \vec T^p(i+h)f'_p d\mu+\Psi''_H\\
\end{eqnarray*}
where $||\Psi'_H||$ and $||\Psi''_H||$ are small when $c$ is large and $m\geq n$.

Consider the expression $\vec T^p(i)f'_p\vec T^p(i+h)f'_p$.  If $p$ has degree $1$, this is $\vec T^p(i)(f'_p\cdot \vec T^p(h)f'_p)$, and if $p$ has degree greater than $1$, this is $\vec T^p(i)f'_p\cdot \vec T^p(i+h)(\vec T^{p}(h))^{-1}(\vec T^p(h)f'_p)$.  In particular,
\[\prod_{p\in A}\vec T^p(i)f'_p \vec T^p(i+h)f'_p =\prod_{p\in\tilde A_h}\vec T^p(i)\tilde f_p\]
where each $\tilde f_p$ is one of $f'_{p'}$, $\vec T^{p'}(h)f'_{p'}$, or $f'_{p'}\vec T^{p'}(h)f
_{p'}$ for some $p'\in A$.  For any $p_0\in\tilde A_h$, we also have
\[\int \prod_{p\in\tilde A_h}\vec T^p(i)\tilde f_p d\mu=\int \tilde f_{p_0}\prod_{p\in\tilde A_h\setminus\{p_0\}}\vec T^p(i)(\vec T^{p_0}(i))^{-1}\tilde f_p d\mu\]
and when $p_0$ has minimal weight, this is equal to
\[\int \tilde f_{p_0}\prod_{p\in A_h}\vec T^p(i)\tilde f_p d\mu.\]

When $m\geq n$, $(\dagger_1)$ implies that 
\[\frac{1}{m}\sum_{i<m}\int \tilde f_{p_0}\prod_{p\in A_h}\vec T^p(i)\tilde f_p d\mu\]
is close to
\[\frac{1}{m}\sum_{i<m}\int E(\tilde f_{p_0}\mid\mathcal{Y}_\delta)\prod_{p\in A_h}\vec T^p(i)E(\tilde f_p\mid\mathcal{Y}_\delta) d\mu.\]
Recall that $E(f'_{p_*}\mid\mathcal{Y}_\delta)=0$.  If $p_*$ has degree greater than $1$ then for all but an initial segment of the $h$, $\tilde f_p=f'_{p_*}$ for some $p\in A_h$, and therefore this expression is $0$.  If $p_*$ has degree equal to $1$, this expression is bounded by
\[||E(f'_{p_*} \vec T^{p_*}(h) f'_{p_*}\mid\mathcal{Y}_\delta)||_{L^2}\cdot\prod_{p\in A_h\setminus\{p_*\}}||f_p||_{L^\infty}^2.\]
But by $(\dagger_2)$,
\[\frac{1}{H}\sum_{h=1-H}^{H-1}||E(f'_{p_*} \vec T^{p_*}(h) f'_{p_*}\mid\mathcal{Y}_\delta)||_{L^2}\]
is close to $||E(f'_{p_*}\mid\mathcal{Y}_\delta)||^2_{L^2}=0$.
\end{proof}

Similarly, we can prove the same thing for formal elements of the relative square:
  \begin{theorem}
For every $\epsilon>0$, $B>0$, and $\gamma<\omega^\omega$, there are polynomials $q,q'$ such that for every integer $D$, there is an ordinal $\theta<\omega^{\gamma\cdot q(D)+q'(D)}$ such that for every $\Gamma\subseteq\mathbb{Z}^d$, every linearly independent $T_1,\ldots,T_t\in\mathbb{Z}^d\setminus\Gamma$, every tower $(\mathcal{Y}_\delta)_{\delta<\eta}$ of $\Gamma^+$-compact extensions, every system of essentially distinct non-constant integral polynomials $A$ with $o(A)\leq\gamma$ and $|A|\leq D$, and every $\{f_p\}$ in $L_0^\infty(Z_\Gamma(\mathcal{Y}_0)\times Z_\Gamma(\mathcal{Y}_0))$ with $||f_p||_\infty\leq B$, every $\alpha\geq\omega$, every $\alpha^\theta$-sequence $s$, there are an $n$ and an $\alpha$-subsequence $t$ of $s$ such that
  \begin{quote}
    for every $m\geq n$, $||\frac{1}{m}\sum_{i<m}\left(\prod_{p\in A}\vec T^p(i)f_p - \prod_{p\in A}\vec T^p(i)E(f_p\mid\mathcal{Y}_\delta)\right)||<\epsilon$
  \end{quote}
holds for $t$-many $\delta$.
\label{polywm2}
\end{theorem}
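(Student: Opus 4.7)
The plan is to follow the proof of Theorem~\ref{polywm} essentially verbatim, with every pointwise product replaced by a formal tensor product in $L_0^\infty$, and every appeal to Theorem~\ref{wm} or Lemma~\ref{wmgen} replaced by its relative-square counterpart (Lemma~\ref{wm2}, or the tensor analogue of Lemma~\ref{wmgen} noted immediately after Lemma~\ref{wm2}).

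First I would observe that, by subadditivity of $||\cdot||$ and multilinearity in each $f_p$, it suffices to treat the case where each $f_p = g_p \otimes h_p$ is a single basic tensor. In that case $\prod_{p\in A}\vec T^p(i)f_p = (\prod_p \vec T^p(i) g_p) \otimes (\prod_p \vec T^p(i) h_p)$ and $E(f_p \mid \mathcal{Y}_\delta)$ decomposes componentwise into the product of the conditional expectations in each slot. I then induct on $\gamma = o(A)$. Using Lemma~\ref{tech:split} on each tensor slot of each $f_p$ separately, I reduce to the case where each tensor factor is either $\mathcal{Y}_\delta$-measurable or $\mathcal{Y}_\delta$-orthogonal; when every factor is $\mathcal{Y}_\delta$-measurable the difference vanishes identically, and otherwise some $p_* \in A$ has $E(f_{p_*} \mid \mathcal{Y}_\delta) = 0$ (in at least one tensor slot).

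Next I apply the same Van der Corput-style $h$-shift used in the proof of Theorem~\ref{polywm}: squaring the norm and averaging over a window of size $H$ converts the main term into an average of integrals indexed by $\tilde A_h$, and subtracting a minimal-weight shift produces an expression indexed by $A_h$ with $o(A_h) < o(A) \leq \gamma$. The induction hypothesis applied to $A_h$, combined with Lemma~\ref{tech:tech}, furnishes a subsequence with $t$-many $\delta$ at which the tensor analogue of $(\dagger_1)$ holds; in the degree-one case, simultaneously applying the tensor analogue of Lemma~\ref{wmgen} yields $(\dagger_2)$ in the tensor setting, and the remaining computation --- using $E(f'_{p_*} \mid \mathcal{Y}_\delta) = 0$ to annihilate the dominant contribution --- goes through unchanged.

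The main obstacle is a bookkeeping one: every intermediate expression must be kept in $L_0^\infty$, not merely in $L^2(\mathcal{X} \times_{\mathcal{Y}_\delta} \mathcal{X})$, so that the induction hypothesis can be invoked with the same ordinal bound uniformly in $\delta$. This is exactly what the $L_0^\infty$ formalism was designed for: it is closed under shifts, componentwise products, and conditional expectations, and the formal bound $||\sum_i c_i f_i \otimes g_i||_\infty = \sum_i c_i ||f_i||_\infty ||g_i||_\infty$ dominates the genuine $L^\infty(\mathcal{X} \times_{\mathcal{Y}_\delta} \mathcal{X})$ norm uniformly across $\delta$. Consequently the same ordinal $\theta < \omega^{\gamma \cdot q(D) + q'(D)}$ suffices, possibly with slightly larger polynomials $q, q'$ to absorb the factor of $2$ from splitting each $f_p$ into two tensor components.
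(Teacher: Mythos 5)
Your proposal matches the paper's approach: the paper's proof of this theorem is literally ``identical to the proof of the previous theorem, using Lemma~\ref{wm2} in place of Theorem~\ref{wm},'' and your elaboration unpacks exactly those substitutions (Lemma~\ref{wm2} for Theorem~\ref{wm}, and the tensor analogue of Lemma~\ref{wmgen} for Lemma~\ref{wmgen} in the degree-one step), carried through the PET induction and Van der Corput shift in the $L_0^\infty$ formalism. Your preliminary reduction to single basic tensors is an extra step the paper does not make at this level --- the paper carries formal sums throughout and leaves tensor manipulations to Lemma~\ref{wm2} itself --- but it is harmless, and your remarks on $L_0^\infty$ closure under shifts, products, and conditional expectation are precisely the point of that formalism.
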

\begin{proof}
  Identical to the proof of the previous theorem, using Lemma \ref{wm2} in place of Theorem \ref{wm}.
\end{proof}

As in \cite{bergelson96}, we prove the following special cases:
\begin{corollary}
For every $\epsilon>0$, $B>0$, and $\gamma<\omega^\omega$, there is an ordinal $\theta<\omega^{\gamma\cdot\omega}$ such that for every $\Gamma\subseteq\mathbb{Z}^d$, every linearly independent $T_1,\ldots,T_t\in\mathbb{Z}^d\setminus\Gamma$, every tower $(\mathcal{Y}_\delta)_{\delta<\eta}$ of $\Gamma^+$-compact extensions, every system of pairwise essentially distinct polynomials $A$ with $o(A)\leq\gamma$, every $\{f_p\}_{p\in A}$ in $L^\infty(Z_\Gamma(\mathcal{Y}_0))$ with $||f_p||_\infty\leq B$ for each $p\in A$, every $\alpha\geq\omega$, every $\alpha^\theta$-sequence $s$, there are an $n$ and an $\alpha$-subsequence $t$ of $s$ such that
  \begin{quote}
    for every $m\geq n$, $\frac{1}{m}\sum_{i<m}\int\left|E(\prod_{p\in A}\vec T^p(i)f_p\mid\mathcal{Y}_\delta)-\prod_{p\in A}\vec T^p(i)E(f_p\mid\mathcal{Y}_\delta)\right|d\mu<\epsilon$
  \end{quote}
holds for $t$-many $\delta$.
\label{polywm2}
\end{corollary}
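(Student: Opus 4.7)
The plan is to upgrade the $L^2$-norm-of-average bound provided by the relative-square version of Theorem \ref{polywm} into the average-of-$L^1$-conditional-expectations bound demanded here, by passing through the relative square $\mathcal{X}\times_{\mathcal{Y}_\delta}\mathcal{X}$. Write $X_i := \prod_{p\in A}\vec T^p(i)f_p - \prod_{p\in A}\vec T^p(i)E(f_p\mid\mathcal{Y}_\delta)$, so that $E(X_i\mid\mathcal{Y}_\delta)$ is exactly the integrand in the corollary's estimate (since the second product is already $\mathcal{Y}_\delta$-measurable). Two applications of Cauchy--Schwarz---bounding $\int|g|d\mu$ by $||g||_{L^2}$ and then replacing an average of $L^2$-norms by the square root of an average of their squares---give
$$\frac{1}{m}\sum_{i<m}\int|E(X_i\mid\mathcal{Y}_\delta)|\,d\mu \leq \left(\frac{1}{m}\sum_{i<m}||E(X_i\mid\mathcal{Y}_\delta)||_{L^2}^2\right)^{1/2}.$$
Using the identity $||E(X\mid\mathcal{Y}_\delta)||_{L^2(\mathcal{X})}^2 = \int X\otimes X\,d(\mu\times_{\mathcal{Y}_\delta}\mu)$ and bounding an integral by the $L^2$ norm on the relative square, the expression under the square root is bounded by $||\tfrac{1}{m}\sum_i X_i\otimes X_i||_{L^2(\mathcal{X}\times_{\mathcal{Y}_\delta}\mathcal{X})}$, so it suffices to make this quantity smaller than $\epsilon^2$.

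Expand $X_i\otimes X_i = P_i - Q_i - R_i + S_i$, where $P_i, Q_i, R_i, S_i$ are obtained by applying $\prod_p\vec T^p(i)$ to $f_p\otimes f_p$, $f_p\otimes E(f_p\mid\mathcal{Y}_\delta)$, $E(f_p\mid\mathcal{Y}_\delta)\otimes f_p$, and $E(f_p\mid\mathcal{Y}_\delta)\otimes E(f_p\mid\mathcal{Y}_\delta)$ respectively, and rewrite this as $(P_i-S_i) - (Q_i-S_i) - (R_i-S_i)$. For each of the three tensor inputs $f_p\otimes f_p$, $f_p\otimes E(f_p\mid\mathcal{Y}_\delta)$, and $E(f_p\mid\mathcal{Y}_\delta)\otimes f_p$, the conditional expectation onto $\mathcal{Y}_\delta$ inside the relative square equals the common function $E(f_p\mid\mathcal{Y}_\delta)\otimes E(f_p\mid\mathcal{Y}_\delta)$; hence three applications of the relative-square analogue of Theorem \ref{polywm}, merged into a single $\alpha$-subsequence via iterated use of Lemma \ref{tech:pair}, yield $L^2$-bounds of $\epsilon^2/3$ on each of $||\tfrac{1}{m}\sum(P_i-S_i)||$, $||\tfrac{1}{m}\sum(Q_i-S_i)||$, $||\tfrac{1}{m}\sum(R_i-S_i)||$, and the triangle inequality concludes.

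For the ordinal bookkeeping, each application of the relative-square theorem uses $\theta<\omega^{\gamma\cdot q(D)+q'(D)}$ for fixed polynomials $q,q'$ and $D=|A|$; merging three such bounds via Lemma \ref{tech:pair} still yields an ordinal of the form $\omega^{\gamma\cdot Q(D)+Q'(D)}$ for slightly larger polynomials $Q,Q'$. Since $Q(D),Q'(D)$ remain finite for every fixed $A$, the exponent is strictly less than $\gamma\cdot\omega$, placing the resulting $\theta$ strictly below $\omega^{\gamma\cdot\omega}$ as required.

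The main technical obstacle to check is that the mixed tensors $f_p\otimes E(f_p\mid\mathcal{Y}_\delta)$ and $E(f_p\mid\mathcal{Y}_\delta)\otimes f_p$ lie in $L_0^\infty(Z_\Gamma(\mathcal{Y}_0)\times Z_\Gamma(\mathcal{Y}_0))$, equivalently that $E(f_p\mid\mathcal{Y}_\delta)\in Z_\Gamma(\mathcal{Y}_0)$. This holds because conditional expectation onto the $\mathbb{Z}^d$-invariant factor $\mathcal{Y}_\delta$ commutes with each $R\in\Gamma$ and is fiberwise $L^2$-contractive: for $B\in\mathcal{Y}_0\subseteq\mathcal{Y}_\delta$, the $\Gamma$-orbit of $E(f_p\mid\mathcal{Y}_\delta)\chi_B$ is the $E(\cdot\mid\mathcal{Y}_\delta)$-image of the $\Gamma$-orbit of $f_p\chi_B$ and thereby inherits the required fiberwise finite-approximation property.
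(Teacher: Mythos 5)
Your overall route---writing $X_i=\prod_{p\in A}\vec T^p(i)f_p-\prod_{p\in A}\vec T^p(i)E(f_p\mid\mathcal{Y}_\delta)$, using Cauchy--Schwarz to reduce to bounding $||\frac{1}{m}\sum_{i<m}X_i\otimes X_i||_{L^2(\mathcal{X}\times_{\mathcal{Y}_\delta}\mathcal{X})}$, and invoking the relative-square analogue of Theorem \ref{polywm}---is the same passage through the relative square that the paper uses, but the way you feed inputs to that theorem contains a genuine gap. All of the metastable statements here (the relative-square theorem, Lemma \ref{tech:pair}, Lemma \ref{tech:tech}) have a rigid quantifier order: the functions $\{f_p\}$ in $L^\infty_0(Z_\Gamma(\mathcal{Y}_0)\times Z_\Gamma(\mathcal{Y}_0))$ must be fixed \emph{before} the single $n$ and the single $\alpha$-subsequence $t$ are extracted, and the conclusion is then asserted for $t$-many $\delta$. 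Two of your three inputs, $f_p\otimes E(f_p\mid\mathcal{Y}_\delta)$ and $E(f_p\mid\mathcal{Y}_\delta)\otimes f_p$, depend on the very $\delta$ at which the conclusion is to be evaluated, so the relative-square theorem cannot be applied to them as stated, and Lemma \ref{tech:pair} (which merges properties for a fixed tuple of inputs) does not repair this. The only $\delta$-dependence the machinery tolerates is the specific one manufactured by Lemma \ref{tech:split}, namely replacing a fixed input $f$ by $E(f\mid\mathcal{Y}_\delta)$ or $f-E(f\mid\mathcal{Y}_\delta)$ inside a formula continuous in $\delta$ and stable under small $L^2$ perturbations; and splitting the fixed tensor $f_p\otimes f_p$ in this way produces $E(f_p\otimes f_p\mid\mathcal{Y}_\delta)=E(f_p\mid\mathcal{Y}_\delta)\otimes E(f_p\mid\mathcal{Y}_\delta)$ and its complement, never your mixed tensors. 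Your final paragraph only addresses whether $E(f_p\mid\mathcal{Y}_\delta)$ lies in $Z_\Gamma(\mathcal{Y}_0)$, which, even granted, does not touch this quantifier problem. The paper avoids it by applying Lemma \ref{tech:split} repeatedly \emph{first}, reducing to integrands built from $f'_p\in\{E(f_p\mid\mathcal{Y}_\delta),\,f_p-E(f_p\mid\mathcal{Y}_\delta)\}$ with some $f'_{p_*}$ of the second kind, and only then applying the relative-square theorem to the fixed formal elements $f'_p\otimes f'_p$; the smallness comes from $E(f'_{p_*}\mid\mathcal{Y}_\delta)=0$, not from cancelling cross terms.

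A second, smaller omission: the corollary only assumes the elements of $A$ are pairwise essentially distinct, so $A$ may contain one sequence $c$ for which $\vec T^c(i)$ is constant in $i$, while the relative-square theorem requires all polynomials non-constant. The paper handles this by singling out the (at most one) such $c$, applying the tensor theorem to $A\setminus\{c\}$ only, and reinstating $f'_c$ afterwards via boundedness and Cauchy--Schwarz; your direct application of the theorem to all of $A$ skips this case. If you want to salvage your decomposition, you would need to route the mixed tensors through an additional use of Lemma \ref{tech:split} (treating the second tensor factor as the function being split, which the uniformity of $\theta$ in the relative-square theorem does permit) and deal with the constant element as the paper does; as written, the argument does not go through.
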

\begin{proof}
Repeatedly applying Lemma \ref{tech:split}, we may reduce to proving
  \begin{quote}
    for every $m\geq n$, $\frac{1}{m}\sum_{i<m}\int\left|E(\prod_{p\in A}\vec T^p(i)f'_p\mid\mathcal{Y}_\delta)\right|d\mu<\epsilon$
  \end{quote}
where $f'_p$ is either $f_p-E(f_p\mid\mathcal{Y}_\delta)$ or $E(f_p\mid\mathcal{Y}_\delta)$, and for at least one $p$ the first case holds.

By Theorem \ref{polywm2} and for suitable $\delta$, we may choose $\theta$, and given the remaining parameters (using $f_p\otimes f_p$ as the elements of $\mathcal{L}_0^\infty(Z_\Gamma(\mathcal{Y}_0)\times Z_\Gamma(\mathcal{Y}_0))$), since the elements of $A$ are pairwise essentially distinct and the $T_i$ are linearly independent, at most one $\vec T^p$ is constant.  Without loss of generality, we may assume this is a distinguished element $c\in A$.  Then we have
\begin{quote}
  for every $m\geq n$, $||\frac{1}{m}\sum_{i<m}\left(\prod_{p\in
      A\setminus\{c\}}(\vec T^p(i)\otimes \vec T^p(i))(f'_p\otimes f'_p) \right)||<\delta/B$
\end{quote}
for $t$-many $\delta$, and therefore
\[\frac{1}{m}\sum_{i<m}\int E(\prod_{p\in A\setminus\{c\}}(\vec T^p(i)\otimes \vec T^p(i))(f'_p\otimes f'_p)\mid\mathcal{Y}_\delta) d\mu<\delta.\]

But this implies that
\[\frac{1}{m}\sum_{i<m}\int f'_c E(\prod_{p\in A\setminus\{c\}}\vec T^p(i)f'_p\mid\mathcal{Y}_\delta)^2 d\mu<\delta\]
and, having chosen $\delta$ small in $\epsilon$ and $B$, also
\[\frac{1}{m}\sum_{i<m}\int \left|E(\prod_{p\in A}\vec T^p(i)f'_p\mid\mathcal{Y}_\delta)\right| d\mu<\epsilon.\]
\end{proof}

\begin{corollary}
 For every $\epsilon,\epsilon'>0$, and $\gamma<\omega^\omega$, there is an ordinal $\theta<\omega^{\gamma\cdot \omega}$ such that for every $\Gamma\subseteq\mathbb{Z}^d$, every linearly independent $T_1,\ldots,T_t\in\mathbb{Z}^d\setminus\Gamma$, every tower $(\mathcal{Y}_\delta)_{\delta<\eta}$ of $\Gamma^+$-compact extensions, every system of pairwise essentially distinct integral polynomials $A$ with $o(A)\leq\gamma$, every set $B$ measurable with respect to $Z_\Gamma(\mathcal{Y}_0)$, every $\alpha\geq\omega$, every $\alpha^\theta$-sequence $s$, there is an $\alpha$-subsequence $t$ of $s$ such that
  \begin{quote}
    $\mu\{x\mid \left|E(\bigcap_{p\in A}\vec T^p(i)B\mid\mathcal{Y}_\delta)(x)-\prod_{p\in A}E(\vec T^p(i)B\mid\mathcal{Y}_\delta)(x)\right|\geq\epsilon\}\geq\epsilon'$
    has density $0$
  \end{quote}
holds for $t$-many $\delta$.
\label{polywm3}
\end{corollary}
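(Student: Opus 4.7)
The strategy is to specialize the preceding corollary to $f_p = \chi_B$ for $p \in A$, then convert the resulting $L^1$ Cesàro bound into a density statement via Markov's inequality. Since $B$ is measurable with respect to $Z_\Gamma(\mathcal{Y}_0)$, the characteristic function $\chi_B$ lies in $L^\infty(Z_\Gamma(\mathcal{Y}_0))$ with $\|\chi_B\|_\infty = 1$, so the preceding corollary applies with sup bound $1$. Writing $\chi_{\bigcap_{p\in A}\vec T^p(i)B} = \prod_{p\in A}\vec T^p(i)\chi_B$ and setting
\[F_i^\delta := E\Bigl(\bigcap_{p\in A}\vec T^p(i) B \,\Big|\, \mathcal{Y}_\delta\Bigr) - \prod_{p\in A} E\bigl(\vec T^p(i)B \bigm| \mathcal{Y}_\delta\bigr),\]
the quantity $\frac{1}{m}\sum_{i<m}\int|F_i^\delta|\,d\mu$ is exactly what the preceding corollary controls.

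Given $\epsilon, \epsilon' > 0$, I would apply the preceding corollary with its $\epsilon$-parameter set to $\eta := \sigma\epsilon\epsilon'$ for a tolerance $\sigma > 0$. This gives $\theta < \omega^{\gamma\cdot\omega}$ such that for every $\alpha^\theta$-sequence $s$ there is an $\alpha$-subsequence $t$ and an $n$ with $\frac{1}{m}\sum_{i<m}\int|F_i^\delta|\,d\mu < \sigma\epsilon\epsilon'$ for all $m\geq n$ and $t$-many $\delta$. For each such $\delta$, Markov's inequality gives $\mu\{x : |F_i^\delta(x)| \geq \epsilon\} \leq \|F_i^\delta\|_{L^1}/\epsilon$, so
\[\frac{1}{m}\bigl|\{i < m : \mu(|F_i^\delta| \geq \epsilon) \geq \epsilon'\}\bigr| \leq \frac{1}{\epsilon\epsilon'}\cdot\frac{1}{m}\sum_{i<m}\|F_i^\delta\|_{L^1} < \sigma\]
for $m\geq n$; the exceptional set thus has upper density at most $\sigma$.

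The main obstacle is the gap between ``upper density $\leq \sigma$'' (from a single application with one parameter) and literal density $0$. Iterating over a sequence $\sigma_k \to 0$ and combining via Lemma \ref{tech:pair} would drive $\theta$ past $\omega^{\gamma\cdot\omega}$, so I expect the density-$0$ conclusion to be read metastably: the ordinal $\theta$ (depending on a chosen $\sigma$ as well as on $\epsilon, \epsilon'$) produces $\delta$'s whose exceptional sets have upper density at most $\sigma$, and ``density $0$'' is the external statement that this can be arranged for arbitrarily small $\sigma > 0$. Under this reading, the single application above suffices; the bound $\omega^{\gamma\cdot\omega}$ is inherited unchanged from the preceding corollary, since passing from the $L^1$ bound to the density bound costs no additional depth in the tower.
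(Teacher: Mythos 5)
Your proposal follows the paper's own route: apply the preceding corollary to $\chi_B$ (so $||\chi_B||_\infty\leq 1$ and $\prod_{p\in A}\vec T^p(i)\chi_B=\chi_{\bigcap_{p\in A}\vec T^p(i)B}$), and turn the $L^1$ Ces\`aro bound into a density bound on the exceptional set by the observation that if a $c$-fraction of the indices $i<N$ each contribute at least $\epsilon\epsilon'$ to $\int|F_i^\delta|\,d\mu$, then the Ces\`aro average is at least $c\epsilon\epsilon'$. The only divergence is the final step: the paper runs this as a proof by contradiction (``if $P$ had positive density with constant $c$, then arbitrarily large $N$ give average $\geq c\epsilon\epsilon'$, contradiction'') and asserts density $0$, while you stop at upper density at most $\sigma$ for the pre-chosen tolerance $\sigma\epsilon\epsilon'$ and propose a metastable reading of ``density $0$.'' Note, though, that the paper's contradiction is just the contrapositive of your Markov computation: since the tolerance in the application of the preceding corollary is fixed before $c$ is produced, the contradiction only excludes $c\geq\eta/(\epsilon\epsilon')$, i.e.\ it too yields upper density at most $\eta/(\epsilon\epsilon')$ at the given $\delta$, not literal density $0$. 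So you have not missed an idea that the paper supplies; the step you decline to make is exactly the step the paper passes over, and your quantified weakening is what the single application actually proves. The caveat is that the corollary is later invoked with the literal density-$0$ conclusion (the good set of times is intersected with a set whose positive lower density is not known in advance), so under your reinterpretation that downstream argument would need to be rearranged, or else one must genuinely combine shrinking tolerances along the same subsequence rather than make one application.
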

\begin{proof}
Apply Corollary \ref{polywm2} to $\chi_B$ obtain
\[\frac{1}{m}\sum_{i<m}|E(\bigcap_{p\in A}\vec T^p(i)B\mid\mathcal{Y}_\delta)(x)-\prod_{p\in A}E(\vec T^p(i)B\mid\mathcal{Y}_\delta)(x)|d\mu\geq\epsilon.\]
Setting $F_i(x):=E(\bigcap_{p\in A}\vec T^p(i)B\mid\mathcal{Y}_\delta)(x)-\prod_{p\in A}E(\vec T^p(i)B\mid\mathcal{Y}_\delta)(x)$, if the set
\[P:=\{n\mid \mu\{x\mid |F_n(x)|\geq\epsilon\}\geq\epsilon'\}\]
has positive density then we could find $c$ such that for arbitrarily large $N$, $\{n\in P\mid n<N\}\geq cN$, and therefore
\[\frac{1}{N}\sum_{i<N}\int |F_n(x)|d\mu\geq c\epsilon\epsilon',\]
which is a contradiction.
\end{proof}
We could insist on slightly more, requiring that the sets begin having low density at a fixed value $n$ simultaneously, for instance, but we do not need this.

\section{Almost Primitive Extensions}
The following is essentially shown in \cite{furstenberg79}:
\begin{lemma}
  If for every $\alpha$, $\mathcal{Y}_\alpha$ is an SZP-system then so is the system generated by $\bigcup_{\alpha<\gamma}\mathcal{Y}_\alpha$.
\label{SZPlimit}
\end{lemma}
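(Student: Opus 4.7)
My plan is a standard $L^1$-approximation argument reducing SZP for the limit system to SZP at some single level of the tower. Let $\mathcal{Y} = \sigma\bigl(\bigcup_{\alpha<\gamma}\mathcal{Y}_\alpha\bigr)$ denote the generated system, and fix an arbitrary minimal generating set $T_1,\ldots,T_t$, a set $A$ of pairwise distinct length-$t$ sequences of integral-zero polynomials, and $B \in \mathcal{Y}$ with $\mu(B) > 0$; the task is to show that the liminf in the definition of SZP is strictly positive for this data.

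Since $\bigcup_{\alpha<\gamma}\mathcal{Y}_\alpha$ is a directed algebra whose generated $\sigma$-algebra is $\mathcal{Y}$, for every $\eta > 0$ there exist $\alpha < \gamma$ and $B' \in \mathcal{Y}_\alpha$ with $\mu(B \triangle B') < \eta$; taking $\eta < \mu(B)/2$ guarantees $\mu(B') > 0$. Applying the SZP hypothesis for $\mathcal{Y}_\alpha$ to $B'$ (with the same $T_1,\ldots,T_t$ and $A$) yields some $c > 0$ with
$$\liminf_{m\to\infty}\frac{1}{m}\sum_{i<m}\mu\Bigl(\bigcap_{\vec p\in A}\vec T^{\vec p}(i)B'\Bigr)\geq c.$$
Since each $\vec T^{\vec p}(i)$ is measure-preserving and the symmetric difference is subadditive over finite intersection,
$$\mu\Bigl(\bigcap_{\vec p\in A}\vec T^{\vec p}(i)B\Bigr)\geq\mu\Bigl(\bigcap_{\vec p\in A}\vec T^{\vec p}(i)B'\Bigr)-|A|\eta,$$
so the liminf for $B$ is at least $c-|A|\eta$.

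The main obstacle is that $c$ depends on the approximating set $B'$, and hence on $\eta$ and $\alpha$, so choosing $\eta < c/(2|A|)$ requires care: we cannot simply compute $c$ first and then shrink $\eta$. The standard resolution in this setting is the uniform form of SZP established in Furstenberg--Katznelson: in any SZP system the constant $c$ in the definition can in fact be taken to depend only on $\mu(B')$, not on the particular set $B'$. With this uniformity, one fixes $c^* > 0$ as the SZP bound for sets of measure at least $\mu(B)/2$, chooses $\eta < \min(\mu(B)/2,\, c^*/(2|A|))$ \emph{before} the approximation step, invokes the approximation and SZP to obtain $c \geq c^*$, and concludes that the liminf for $B$ is at least $c^*/2 > 0$.
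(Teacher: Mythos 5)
There is a genuine gap at the step where you try to break the circular dependence between $\eta$ and $c$. Your fix invokes a ``uniform form of SZP'' in which the constant depends only on $\mu(B')$, but (a) no such statement is available at this point of the induction --- the SZP property as defined (and as propagated up the tower in Furstenberg--Katznelson and Bergelson--Leibman) is a per-set statement, and the uniform version is essentially a consequence of the full theorem rather than a lemma one can cite while proving closure under limits --- and (b) even granting a uniform constant for each fixed system, it does not close the gap: the level $\alpha$ at which $B'$ lives depends on $\eta$, so you would need a constant $c^*$ uniform over the sets \emph{and} over all levels $\mathcal{Y}_\alpha$, $\alpha<\gamma$. Since the $\sigma$-algebras increase, these per-level uniform constants are non-increasing in $\alpha$ and could a priori tend to $0$; ruling that out is precisely a (strengthened, uniform) form of the lemma you are trying to prove, so the argument begs the question. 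The additive error $|A|\eta$ is the culprit: it has to be compared against a quantity you cannot fix in advance.

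The standard argument --- the one in the cited Furstenberg--Katznelson source --- avoids this by making the error \emph{relative} rather than additive. Given $B$ in the limit $\sigma$-algebra, use martingale convergence to pick $\alpha$ with $\|\chi_B-E(\chi_B\mid\mathcal{Y}_\alpha)\|_{1}<\mu(B)/(2|A|)$ and set $B':=\{y: E(\chi_B\mid\mathcal{Y}_\alpha)(y)>1-\tfrac{1}{2|A|}\}$, which then has positive measure. Since $\mathcal{Y}_\alpha$ is an invariant factor, $E(\vec T^{\vec p}(i)\chi_B\mid\mathcal{Y}_\alpha)=\vec T^{\vec p}(i)E(\chi_B\mid\mathcal{Y}_\alpha)\ge 1-\tfrac{1}{2|A|}$ on $\vec T^{\vec p}(i)B'$, so with $C_i:=\bigcap_{\vec p\in A}\vec T^{\vec p}(i)B'$ one gets
\[
\mu\Bigl(C_i\setminus\bigcap_{\vec p\in A}\vec T^{\vec p}(i)B\Bigr)\le\sum_{\vec p\in A}\int_{C_i}\bigl(1-\vec T^{\vec p}(i)E(\chi_B\mid\mathcal{Y}_\alpha)\bigr)\,d\mu\le\tfrac12\,\mu(C_i),
\]
hence $\mu\bigl(\bigcap_{\vec p\in A}\vec T^{\vec p}(i)B\bigr)\ge\tfrac12\,\mu(C_i)$ for every $i$. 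Now the per-set SZP of $\mathcal{Y}_\alpha$ applied to $B'$ immediately gives a positive liminf for $B$, with no uniformity needed. Your reduction to a single level and your use of measure preservation are fine; replacing the plain $L^{1}$-approximation by this conditional-expectation superlevel-set construction is what repairs the proof.
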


Bergelson and Liebman \cite{bergelson96} prove the following:
\begin{lemma}
  Let $\mathcal{X}$ be an extension of $\mathcal{Y}$ with $\mathcal{Y}$ SZP, let $\mathbb{Z}^d=\Gamma\times\Delta$ so that $\mathcal{X}$ is compact relative to $\mathcal{Y}$ with respect to $\Gamma$, let $f\in L^2(\mathcal{X})$ be given.  Let $R_1,\ldots,R_r$ each have the form $\vec T^p$ for some sequence of pairwise distinct integral-zero polynomials $p_j$ and some $\vec T$ in $\Gamma$, and let $S_1,\ldots,S_s$ have the form $\vec T^p$ for some polynomial $p$ and some $\vec T$ in $\Delta$.  Let $B$ be measurable with respect to $\mathcal{Y}$ with $\mu(B)>0$, and let $\epsilon>0$.  Then there exist $P\subseteq\mathbb{N}$, $\underline{d}(P)>0$, a family of sets $\{B_n\mid n\in P\}$, each $B_n$ measurable with respect to $\mathcal{Y}$, and a $b>0$ so that, for any $n\in P$, $1\leq j\leq s$, $1\leq i\leq r$, we have
  \begin{itemize}
  \item $\mu(B_n)>b$
  \item $S_j(n)B_n\subseteq B$
  \item $\forall y\in B_n E((R_i(n)S_j(n)f-S_j(n)f)^2)(y)<\epsilon$.
  \end{itemize}
\end{lemma}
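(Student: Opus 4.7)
The plan is to combine the $\Gamma$-compactness of $\mathcal{X}$ over $\mathcal{Y}$ with the SZP property of $\mathcal{Y}$. Since $R_i(n)\in\Gamma$ and $S_j(n)\in\Delta$ commute (as $\mathbb{Z}^d=\Gamma\times\Delta$), and the fiber norm satisfies $\|Sg\|_y=\|g\|_{S^{-1}y}$, the desired fiber condition $E((R_i(n)S_j(n)f-S_j(n)f)^2)(y)<\epsilon$ rewrites as $\|R_i(n)f-f\|^2_{S_j(n)^{-1}y}<\epsilon$. So it suffices to produce a positive-density $P\subseteq\mathbb{N}$, a constant $b>0$, and for each $n\in P$ a $\mathcal{Y}$-measurable $B_n$ with $\mu(B_n)>b$, $S_j(n)B_n\subseteq B$, and $S_j(n)^{-1}B_n\subseteq A_n:=\{z:\|R_i(n)f-f\|_z<\sqrt\epsilon\ \forall i\}$.

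First I would apply the definition of compact extension to $f$ with parameters $\eta<\mu(B)/4$ and $\delta<\sqrt\epsilon/2$, obtaining a $\mathcal{Y}$-measurable $B'\subseteq Y$ of measure $>1-\eta$ and functions $h_1,\ldots,h_K\in L^2(\mathcal{X})$ such that for each $R\in\Gamma$, $\min_l\|R(f\chi_{B'})-h_l\|_y<\delta$ holds for all $y\in B'$. Partition $B'$ into $\mathcal{Y}$-measurable color classes $B'_l:=\{y\in B':\|f-h_l\|_y<\delta\}$ (disjointifying the indices); since $\mu(B\cap B')>\mu(B)/2$, pigeonhole picks some $l^*$ with $B^*_{l^*}:=B'_{l^*}\cap B$ of positive measure. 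Note that for $z$ simultaneously in $B'_{l^*}$ and the ($\mathcal{Y}$-measurable) set $W_i(n):=\{z':\|R_i(n)f-h_{l^*}\|_{z'}<\delta\}$, the triangle inequality gives $\|R_i(n)f-f\|_z<2\delta<\sqrt\epsilon$, so $B'_{l^*}\cap\bigcap_iW_i(n)\subseteq A_n$.

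The decisive step is to invoke SZP of $\mathcal{Y}$ on an enriched $\mathbb{Z}^d$-polynomial system. Writing polynomials on $\mathbb{Z}^d$ as pairs $(q,p)\in\Gamma\times\Delta$, I apply SZP to $B^*_{l^*}$ and the system $\{(0,-p_j)\}_j\cup\{(q_i,-p_j)\}_{i,j}$ (corresponding to $S_j^{-1}$ and $R_iS_j^{-1}$); pairwise essential distinctness holds because the $q_i$ are nonzero pairwise distinct integral-zero polynomials and the $p_j$ are essentially distinct. SZP then produces a positive-density $P\subseteq\mathbb{N}$ and $c>0$ with $\mu\bigl(\bigcap_jS_j(n)^{-1}B^*_{l^*}\cap\bigcap_{i,j}S_j(n)^{-1}R_i(n)B^*_{l^*}\bigr)>c$ for $n\in P$. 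Setting $B_n$ to be this intersection: the first family of conditions yields $S_j(n)B_n\subseteq B$; the second is to be converted, via commutativity and the fiber-norm shift, into $S_j(n)^{-1}B_n\subseteq A_n$.

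The hardest point will be that final conversion. Membership $R_i(n)S_j(n)^{-1}y\in B^*_{l^*}$ literally gives $\|f-h_{l^*}\|_{R_i(n)S_j(n)^{-1}y}<\delta$, which by the fiber shift is $\|R_i(n)^{-1}f-R_i(n)^{-1}h_{l^*}\|_{S_j(n)^{-1}y}<\delta$ --- information about $R_i(n)^{-1}h_{l^*}$, not $h_{l^*}$, so it does not directly place $S_j(n)^{-1}y$ in $W_i(n)$. Handling this cleanly requires the Furstenberg--Katznelson trick of reapplying compactness to the finite family $\{h_l\}$ itself (each $h_l\in L^2(\mathcal{X})$ is compact over $\mathcal{Y}$ as well), enlarging the reference family until it is approximately closed under the $\Gamma$-action; then the coloring becomes $\Gamma$-equivariant up to a small error and the SZP output can be read off in the direction needed to conclude.
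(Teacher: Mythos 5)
Your reduction of the fiber condition via commutativity, the use of compactness to extract a finite family of fiberwise approximants $h_1,\ldots,h_K$ on a large good set, and the idea of feeding a combined system built from the $S_j$ and $R_iS_j$ into the SZP property of $\mathcal{Y}$ are all reasonable first moves. But the difficulty you flag in your last paragraph is not a technical loose end --- it is the entire content of the lemma, and the fix you propose does not close it. Even if you choose the approximants inside the orbit of $f$ (say $h_l=R_lf$), so that the family is automatically ``approximately $\Gamma$-closed'' on the good set, the information you get from the SZP output is only that $z:=S_j(n)^{-1}y$ and $R_i(n)z$ both lie in the identity-color class: $\|f-h_{l^*}\|_z<\delta$ and $\|f-h_{l^*}\|_{R_i(n)z}<\delta$. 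Shifting the second estimate gives closeness of $R_i(n)^{-1}f$ to $R_i(n)^{-1}h_{l^*}$ over $z$, and approximate closure then places $R_i(n)^{-1}h_{l^*}$ within $\delta$ of \emph{some} $h_{l'}$ over $z$ --- but the index $l'$ depends on $n$ and $z$, and nothing forces $l'=l^*$, nor does anything tell you which approximant $R_i(n)f$ itself is near on the fiber over $z$. So the triangle inequality cannot be closed and membership of $z$ in your set $W_i(n)$ is never established. The color of a fiber under $R_i(n)$ is genuinely new data, not determined by its identity-color, and since the transformations $R_i(n)$ range over an infinite set, no pigeonhole over a fixed color class (nor any enlargement of the reference family) can control it.

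What Bergelson and Liebman actually do --- and this paper deliberately imports their lemma by citation rather than reproving it --- is resolve the color-matching by a Ramsey-theoretic device: they amplify the polynomial system by an auxiliary parameter running over a long finite grid, apply the SZ property of $\mathcal{Y}$ to the amplified system so that, for a positive lower density set of $n$, a positive-measure set of base points has \emph{all} of its grid-indexed images inside the good set, and then color the finite grid by the vector of indices of nearest approximants for the relevant transformations (boundedly many colors, at most $K^{rs}$). The polynomial van der Waerden theorem (Gallai's theorem in the linear Furstenberg--Katznelson case) then produces two grid points of equal color whose difference is again of the required polynomial shape, and passing the triangle inequality through the \emph{common} approximant yields the fiberwise almost-invariance for that difference; the set $P$ and the constant $b$ come out of this step. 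This is why the polynomial van der Waerden theorem is proved first in \cite{bergelson96}: it is precisely the missing ingredient in your argument, and without it (or an equivalent coloring argument) the proof stalls exactly where you stopped.
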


For our purposes, we need slightly more than this: we need a little bit of continuity allowing us to pass to an approximation to $\mathcal{Y}$ while retaining some control over the value of $b$.  Fortunately, the following strengthening follows immediately from their proof:
\begin{lemma}
  Let $\mathcal{X}$ be an extension of $\mathcal{Y}=\bigcup_{n<\omega}\mathcal{Y}_n$ with $\mathcal{Y}$ SZP, let $\mathbb{Z}^d=\Gamma\times\Delta$ so that $\mathcal{X}$ is compact relative to $\mathcal{Y}$ with respect to $\Gamma$, let $f\in L^2(\mathcal{X})$ be given.  Let $R_1,\ldots,R_r$ each have the form $\vec T^p$ for some polynomial $p$ and some $\vec T$ in $\Gamma$, and let $S_1,\ldots,S_s$ have the form $\vec T^p$ for some sequence of pairwise distinct integral-zero polynomials $p_j$ and some $\vec T$ in $\Delta$.  Let $B$ be measurable with respect to $\mathcal{Y}$ with $\mu(B)>0$, and let $\epsilon>0$.  There is a $b$, an $\epsilon'>0$, and an $M$ such that for each $m\geq M$ and each $B'$ measurable with respect to $\mathcal{Y}_m$ such that $\mu(B\bigtriangleup B')<\epsilon'$, there exists $P\subseteq\mathbb{N}$, $\underline{d}(P)>0$ and a family of sets $\{B_n\mid n\in P\}$, each $B_n$ measurable with respect to $\mathcal{Y}$, so that, for any $n\in P$, $1\leq j\leq s$, $1\leq i\leq r$, we have
  \begin{itemize}
  \item $\mu(B_n)>b$
  \item $S_j(n)B_n\subseteq B'$
  \item $\forall y\in B_n E((R_i(n)S_j(n)f-S_j(n)f)^2)(y)<\epsilon$.
  \end{itemize}
\end{lemma}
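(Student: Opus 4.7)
The plan is to apply the original Bergelson--Liebman lemma to $B$ itself and then trim each resulting set $B_n^0$ so that the containment $S_j(n) B_n \subseteq B'$ is maintained for any sufficiently close $\mathcal{Y}_m$-measurable approximation $B'$. Concretely, I would first invoke the previous lemma with the given $\epsilon$ to obtain $b_0 > 0$, a set $P \subseteq \mathbb{N}$ of positive lower density, and $\mathcal{Y}$-measurable sets $B_n^0$ satisfying $\mu(B_n^0) > b_0$, $S_j(n) B_n^0 \subseteq B$, and the pointwise bound on $E((R_i(n)S_j(n)f - S_j(n)f)^2)$ throughout $B_n^0$.

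I would then set $b := b_0/2$ and $\epsilon' := b_0/(3s)$. Given any $B'$ with $\mu(B \bigtriangleup B') < \epsilon'$, define
\[ B_n := B_n^0 \cap \bigcap_{j=1}^s S_j(n)^{-1}(B'). \]
Since $B_n^0 \subseteq S_j(n)^{-1}(B)$, we have $B_n^0 \setminus S_j(n)^{-1}(B') \subseteq S_j(n)^{-1}(B \setminus B')$, and by measure-preservation of $S_j(n)$ each such set has measure at most $\epsilon'$. A union bound over $j$ yields $\mu(B_n^0 \setminus B_n) \leq s\epsilon' < b_0/3$, so $\mu(B_n) > b$. The containment $S_j(n) B_n \subseteq B'$ is immediate from the definition, and the third bullet, being a pointwise bound on all of $B_n^0$, is inherited by the subset $B_n$. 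Measurability with respect to $\mathcal{Y}$ is preserved because $B_n^0$, $B'$, and preimages under the $\mathcal{Y}$-preserving transformations $S_j(n)$ are all $\mathcal{Y}$-measurable. The same $P$ from the original lemma serves as $P$ here.

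The role of $M$ is solely to guarantee that such approximations $B'$ exist at all: since $B$ is $\mathcal{Y}$-measurable and $\mathcal{Y}$ is generated by $\bigcup_n \mathcal{Y}_n$, there is some $M$ such that $B$ lies within $\epsilon'/2$ (in symmetric difference) of some $\mathcal{Y}_M$-measurable set, and then for any $m \geq M$ the hypothesis of the lemma is a nonvacuous constraint. The whole strengthening is thus a shrinkage argument tacked onto the output of the Bergelson--Liebman proof, with the only quantitative input being $\epsilon' < b_0/s$.

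The only thing that could go wrong is if the original proof produced $B_n^0$ whose construction secretly relied on $B$ in a way that resisted small perturbations (for example, if containment had been $B_n \supseteq$ some portion of $B$ rather than $S_j(n)B_n \subseteq B$). Since, however, every output condition is of a ``subset of a shift of $B$'' type and is stable under taking subsets of $B_n^0$, shrinking works, and the strengthening reduces to the bookkeeping above without any reentry into the Bergelson--Liebman construction.
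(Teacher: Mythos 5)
Your shrinkage argument is correct as a proof of the statement as written, but it takes a genuinely different route from the paper. The paper gives no self-contained proof at all: it asserts that the strengthening ``follows immediately from their proof,'' i.e.\ from re-entering the Bergelson--Liebman construction and observing that the data $b$, $P$, $\{B_n\}$ it produces depend on $B$ robustly. You instead use the original lemma as a black box: apply it to $B$ to obtain $b_0$, $P$, $\{B_n^0\}$ with $S_j(n)B_n^0\subseteq B$, then for any $B'$ with $\mu(B\bigtriangleup B')<\epsilon'$ set $B_n:=B_n^0\cap\bigcap_{j\leq s}S_j(n)^{-1}(B')$; since $B_n^0\subseteq S_j(n)^{-1}(B)$ and the $S_j(n)$ are measure-preserving, $\mu(B_n^0\setminus B_n)\leq s\,\mu(B\setminus B')<s\epsilon'$, so $b:=b_0/2$, $\epsilon':=b_0/(3s)$ work, the containment $S_j(n)B_n\subseteq B'$ is built in, the pointwise bound passes to the subset, and $\mathcal{Y}$-measurability is preserved because the factor is invariant under the $\mathbb{Z}^d$-action; your argument even gives the conclusion for every $m$, so $M$ plays no real role. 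What the paper's ``inspect the proof'' route buys that your reduction cannot: going inside the Bergelson--Liebman argument one can extract extra uniformities relative to the approximating factor $\mathcal{Y}_m$ (e.g.\ taking $B_n$, or the conditional-expectation bound in the third bullet, relative to $\mathcal{Y}_m$ rather than $\mathcal{Y}$), and the subsequent theorem appears to use exactly such a bound ($E(R_i(n)S_j(n)C\bigtriangleup S_j(n)C\mid\mathcal{Y}_{\delta_m})(y)<4\epsilon^2$ on $B_n$), which does not follow from the lemma's literal conclusion with conditioning on $\mathcal{Y}$. One small bookkeeping caveat your black-box step silently assumes: the quoted original lemma places the ``pairwise distinct integral-zero'' hypothesis on the $R_i$ while this statement places it on the $S_j$; these are surely meant to coincide (a typo in one of the two statements), but as literally quoted your application of the original lemma needs its hypotheses in its own form.
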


To simplify notation, we write $\omega_4$ for the ordinal $\omega^{\omega^{\omega^{\omega}}}$.
\begin{theorem}
If $(\mathcal{Y}_\delta)_{\delta<\omega_4}$ is a tower of $\Gamma$-compact extensions for every $\Gamma\subseteq\mathbb{Z}^d$ of dimension $\geq d-n$ and each $\mathcal{Y}_\delta$ is SZP then $Z_{d-n-1}(\mathcal{Y}_0)\cup\bigcup_{\delta<\omega_4}\mathcal{Y}_\delta$ is SZP.
\end{theorem}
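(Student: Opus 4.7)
The plan is to feed the polynomial weak mixing approximation of Corollary \ref{polywm3} into the strengthened Bergelson--Liebman lemma just stated. Fix a system $A$ of pairwise essentially distinct integral-zero polynomial sequences and a set $B$ of positive measure in $Z_{d-n-1}(\mathcal{Y}_0)\cup\bigcup_{\delta<\omega_4}\mathcal{Y}_\delta$. Approximating $\chi_B$ in $L^2$, fix $B'$ measurable with respect to $Z_\Gamma(\mathcal{Y}_\alpha)$ for some subgroup $\Gamma\subseteq\mathbb{Z}^d$ of dimension $d-n-1$ and some $\alpha<\omega_4$; the error $\mu(B\bigtriangleup B')$ is chosen small enough to serve as the $\epsilon'$ demanded by the strengthened Bergelson--Liebman lemma once its constants have crystallized. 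Fix a complementary $\Delta$ with $\mathbb{Z}^d=\Gamma\times\Delta$, and split each $\vec T^{\vec p}$ in $A$ into an $R$-factor (generators in $\Gamma$) and an $S$-factor (generators in $\Delta$), as required by the lemma; the $S$-factors remain pairwise distinct integral-zero since the original elements of $A$ are.

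Next, apply Corollary \ref{polywm3} to the tower $(\mathcal{Y}_\delta)_{\delta<\omega_4}$, noting that the tower is $\Gamma^+$-compact because $\Gamma\cup\{S\}$ has dimension $\geq d-n$ for every $S\notin\Gamma$, so the dim $\geq d-n$ compactness hypothesis covers it. Since $o(A)<\omega^\omega$ lives in the PET weight-matrix ordering, the cost $\theta<\omega^{o(A)\cdot\omega}<\omega^{\omega^\omega}$ fits comfortably inside the budget $\omega_4=\omega^{\omega^{\omega^\omega}}$. This produces a $\delta\in(\alpha,\omega_4)$ and a threshold $n_0$ such that, for all $m\geq n_0$,
\[E\bigl(\textstyle\prod_{p\in A}\vec T^p(i)\chi_{B'}\mid\mathcal{Y}_\delta\bigr)\approx\prod_{p\in A}E(\vec T^p(i)\chi_{B'}\mid\mathcal{Y}_\delta)\]
outside a density-zero set of $i$. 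Then apply the strengthened Bergelson--Liebman lemma with $\mathcal{Y}:=\mathcal{Y}_\delta$ and $\mathcal{X}$ the $\Gamma$-compact extension of $\mathcal{Y}_\delta$ generated by $\chi_{B'}$: the SZP hypothesis on $\mathcal{Y}_\delta$ is given, $\bigcup_\delta\mathcal{Y}_\delta$ is SZP by Lemma \ref{SZPlimit}, and the $\Gamma$-compactness is automatic from $B'\in Z_\Gamma(\mathcal{Y}_\alpha)\subseteq Z_\Gamma(\mathcal{Y}_\delta)$. The lemma furnishes $b>0$, a positive-density $P\subseteq\mathbb{N}$, and sets $B_n$ of measure $>b$ with $S_j(n)B_n\subseteq B'$ and $L^2$-approximate $R_i(n)S_j(n)$-invariance of $\chi_B$ on each $B_n$. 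Combining the approximate invariance with the weak-mixing control from the previous step, a subset of $B_n$ of measure at least $b/2$ is contained in $\bigcap_{p\in A}\vec T^p(n)B$ for every $n\in P$, yielding $\liminf_m m^{-1}\sum_{i<m}\mu(\bigcap_{p\in A}\vec T^p(i)B)\geq \underline{d}(P)\cdot b/2>0$.

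The main obstacle is the circular dependence of parameters: the constants $\epsilon'$ and $b$ in the Bergelson--Liebman lemma depend on the factor $\mathcal{Y}_\delta$, while $\delta$ itself is produced by Corollary \ref{polywm3} using data that depend on $\epsilon'$ and $b$. Breaking this circle is exactly the point of the metastable, many-$\delta$ formulation: Corollary \ref{polywm3} furnishes a whole $\alpha$-subsequence of acceptable $\delta$'s simultaneously, so one can first run the Bergelson--Liebman machinery against an arbitrary candidate to extract the constants, and then pick a $\delta$ from that subsequence compatible with those constants. Dimension bookkeeping then closes up ($\Gamma$ of dim $d-n-1$, tower compact for dim $\geq d-n$, so the $\Gamma^+$-compact hypothesis is inherited), and the ordinal $\omega_4$ is tuned precisely to absorb the PET cost $\omega^{o(A)\cdot\omega}$ for every admissible $A$.
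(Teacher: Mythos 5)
Your overall architecture --- feed Corollary~\ref{polywm3} into the strengthened Bergelson--Liebman lemma, split each $\vec T^{\vec p}$ into $R$-- and $S$--parts, check the dimension bookkeeping so the tower is $\Gamma^+$-compact, and verify that $\omega^\theta<\omega_4$ --- matches the paper's strategy. But the mechanism you describe for ``breaking the circle'' is not the one the paper uses, and as stated it doesn't actually work.

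The genuine difficulty is that the constants $b$ and $\epsilon'$ produced by the Bergelson--Liebman lemma depend on the factor $\mathcal{Y}$ you hand it, while the level $\delta$ you get from Corollary~\ref{polywm3} is only useful if the ``bad'' density-zero set lives below a threshold already tied to those constants. Your proposed fix --- ``run the Bergelson--Liebman machinery against an arbitrary candidate to extract the constants, then pick a $\delta$ from the subsequence compatible with those constants'' --- does not close this gap, precisely because the constants from the arbitrary candidate have no reason to govern the $\delta$ you later choose. The ``many-$\delta$'' metastability of Corollary~\ref{polywm3} gives uniformity in the \emph{weak-mixing} estimate across a subsequence, but it gives no uniformity in the Bergelson--Liebman data, which is where the $\delta$-dependence actually bites. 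You also apply the strengthened lemma with $\mathcal{Y}:=\mathcal{Y}_\delta$ for a single $\delta$, but never set up the countable approximating tower $\mathcal{Y}=\bigcup_m\mathcal{Y}_m$ that the strengthened statement is about, so the very clause that was added to the lemma --- the $(\epsilon',M)$ tolerance for a nearby $B'$ measurable at level $m$ --- goes unused.

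What the paper actually does is build a diagonal sequence: it first reduces, via Lemma~\ref{SZPlimit} and a shift of the base level, to showing that $Z_\Gamma(\mathcal{Y}_0)$ alone is SZP for each $\Gamma$ of dimension $\geq d-n-1$ (so there is no need to approximate an arbitrary $B$ from the big union --- you also conflate the roles of $B$ and $B'$ relative to the paper, where $B$ is a level set of $E(C\mid\mathcal{Y}_\delta)$, not the target set). Then, for each $m$, it applies Corollary~\ref{polywm3} with bad-set tolerance $1/m$ to obtain a level $\delta_m$ and a co-density-zero set $P'_m$; the $\delta_m$ are arranged to be increasing and, since each window has length $<\omega^{\omega^{\gamma\cdot\omega}}<\omega_4$ and $\omega_4$ is additively indecomposable, their limit $\delta:=\lim_m\delta_m$ stays strictly below $\omega_4$. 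Only now is the strengthened Bergelson--Liebman lemma invoked, with $\mathcal{Y}=\mathcal{Y}_\delta=\bigcup_m\mathcal{Y}_{\delta_m}$, producing $b$, $\epsilon'$, $M$. At that point one chooses $m\geq M$ large enough that the level set $B'$ at $\delta_m$ is $\epsilon'$-close to the level set $B$ at $\delta$ \emph{and} that $1/m<b/2$, and the two estimates mesh. So the resolution is not a clever choice of a single $\delta$ but a countable diagonalization together with the continuity clause of the strengthened lemma; without that construction, the step where you claim to combine approximate invariance with the weak-mixing control has no rigorous foundation. A smaller issue: the distinct $S_j$ need not be in bijection with $A$ (two sequences can agree on the $\Delta$-coordinates while differing on the $\Gamma$-coordinates), so ``the $S$-factors remain pairwise distinct integral-zero since the original elements of $A$ are'' is not correct as stated; the paper simply lists the distinct ones.
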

\begin{proof}
By Lemma \ref{SZPlimit}, it suffices to show that $Z_{\Gamma}(\mathcal{Y}_0)\cup\mathcal{Y}_\delta$ is SZP for each $\Gamma\subseteq\mathbb{Z}^d$ of dimension $\geq d-n-1$ and $\delta<\omega_4$.  Further, since we can replace $\mathcal{Y}_0$ with $\mathcal{Y}_\delta$ and still have a tower of height $\omega_4$ above, it suffices to show that $Z_{\Gamma}(\mathcal{Y}_0)$ is SZP.

Fix some system of polynomials $A$ and some $\vec T_1,\ldots,T_t\in\mathbb{Z}^d$.  We may assume that for some $q$, $T_1,\ldots,T_q\in\Gamma$ and $T_{q+1},\ldots,T_t\not\in\Gamma$.

Let $C$ be a measurable set in $Z_{\Gamma}(\mathcal{Y}_0)$ with $\mu(C)>a$.  For each $p\in A$, we may write $\vec T^p(n)=R_p(n)S_p(n)$ where $R_p(n)=\prod_{j=1}^q T_j^{-p_j(n)}$ and $S_p(n)=\prod_{j=q+1}^t T_j^{-p_j(n)}$.  We may list the pairwise distinct components $\{R_1,\ldots,R_r\}$, $\{S_1,\ldots,S_s\}$ appearing.  It suffices to find a set $P$ of positive lower density and a $c>0$ so that for each $n\in P$, $\mu(\bigcap R_i(n)S_j(n) C)>c$.

By Lemma \ref{polywm3}, for each $m$, we may find a $\delta_m$ and a $P'_m\subseteq\mathbb{N}$ such that the complement of $P'_m$ has density $0$ and the set of $y$ such that
\[\left|E(\bigcap S_j(n)C\mid\mathcal{Y}_{\delta_m})(y)-\prod S_j E(C\mid\mathcal{Y}_{\delta_m})(y)\right|<a^s/2\]
has size less than $1/m$.  We may assume that the sequence $\delta_m$ is increasing, and set $\delta:=\lim_{m\rightarrow\infty}\delta_m$.

Let $\epsilon=\sqrt{a^s/16rs}$ and let $B:=\{y\mid E(C\mid\mathcal{Y}_{\delta}))(y)>a\}$; in particular, $\mu(B)>0$ and $B$ is measurable with respect to $\mathcal{Y}_{\delta}$.  Apply the preceeding lemma to $\chi_C$, $B$, and $\mathcal{Y}_\delta$; we obtain $b$ and an $\epsilon'$, and may choose $m$ so that $B':=\{y\mid E(C\mid\mathcal{Y}_{\delta_m}))(y)>a\}$ satisfies $\mu(B\bigtriangleup B')<\epsilon'$ and $1/m<b/2$ is sufficiently small relative to $b$.  We obtain $P,\{B_n\}$ such that, in particular, $E(R_i(n)S_j(n)C\bigtriangleup S_j(n)C\mid\mathcal{Y}_{\delta_m})(y)<4\epsilon^2$ and $E((S_j(n))^{-1}C\mid\mathcal{Y}_{\delta_m}))(y)>a$ for each $y\in B_n$.

Now consider some $n\in P'_m\cap P$; for all $y\in B_n$, $\prod S_j E(C\mid\mathcal{Y}_{\delta_m})(y)>a^s$, and for all $y\in B_n$ except for a set of size at most $b/2$,
\[\left|E(\bigcap S_j(n)C\mid\mathcal{Y}_{\delta_m})(y)-\prod S_j E(C\mid\mathcal{Y}_{\delta_m})(y)\right|<a^s/2.\]
Therefore, for such $y$, $E(\bigcap S_j(n)C\mid\mathcal{Y}_{\delta_m})(y)>a^s/2$.  Finally, for every $y\in B_n$, $E(R_i(n)S_j(n)C\bigtriangleup S_j(n)C\mid\mathcal{Y}_{\delta_m})(y)<4\epsilon^2$, and therefore $E(\bigcap R_i(n)S_j(n)C)(y)>a^s/4$.  Since this holds for a set of $y$ of size $\geq b/2$, it follows that $\mu(\bigcap R_i(n)S_j(n)C)\geq a^sb/8$.
\end{proof}

\begin{theorem}
  If $\mathcal{Z}$ is SZP then so is $\mathcal{Y}^{d-n,\omega_4}_\alpha(\mathcal{Z})$ for every $\alpha$.
\end{theorem}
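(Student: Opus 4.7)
The plan is a double induction: main induction on $n$ (ranging from $0$ to $d$) and, at each main stage, a side induction on $\alpha$. The claim at main stage $n$ is that for every SZP base $\mathcal{Z}$ and every ordinal $\alpha$, the factor $\mathcal{Y}^{d-n,\omega_4}_\alpha(\mathcal{Z})$ is SZP. In every side induction the base $\alpha=0$ is immediate from $\mathcal{Y}^{d-n,\omega_4}_0(\mathcal{Z})=\mathcal{Z}$, and the limit case $\alpha=\lambda$ follows from Lemma \ref{SZPlimit} together with the side inductive hypothesis; the successor step is where the content lies.

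For the main step $n\geq 1$, set $\mathcal{Z}':=\mathcal{Y}^{d-n,\omega_4}_\alpha(\mathcal{Z})$, which is SZP by the side inductive hypothesis. By the main inductive hypothesis at stage $n-1$ applied with base $\mathcal{Z}'$, each $\mathcal{Y}^{d-n+1,\omega_4}_\beta(\mathcal{Z}')$ is SZP. The remark following the definition guarantees that $(\mathcal{Y}^{d-n+1,\omega_4}_\beta(\mathcal{Z}'))_{\beta<\omega_4}$ is a tower of $\Gamma$-compact extensions for every $\Gamma\subseteq\mathbb{Z}^d$ of dimension $\geq d-n+1$, so the preceding theorem, applied with its parameter set to $n-1$, yields that $Z_{d-n}(\mathcal{Z}')\cup\bigcup_{\beta<\omega_4}\mathcal{Y}^{d-n+1,\omega_4}_\beta(\mathcal{Z}')$ is SZP. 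The union on the right is $\mathcal{Y}^{d-n+1,\omega_4}_{\omega_4}(\mathcal{Z}')$ by the limit clause, and the whole expression therefore coincides with the definition of $\mathcal{Y}^{d-n,\omega_4}_{\alpha+1}(\mathcal{Z})$, as required.

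The only case not covered by the preceding theorem is the main base $n=0$, whose successor step asks that $Z_d(\mathcal{Y}^{d,\omega_4}_\alpha(\mathcal{Z}))$ be SZP given that $\mathcal{Y}^{d,\omega_4}_\alpha(\mathcal{Z})$ is. Here I use the Bergelson-Liebman lemma from the beginning of this section directly with $\Gamma=\mathbb{Z}^d$ and $\Delta=\{0\}$ (so every $S_j$ is trivial and the $S$-part of the lemma is degenerate): given $C$ of positive measure in $Z_d(\mathcal{Y}^{d,\omega_4}_\alpha)$, take $B:=\{y:E(C\mid\mathcal{Y}^{d,\omega_4}_\alpha)(y)>\mu(C)/2\}$, apply the lemma with $f=\chi_C$ and the polynomial products as the $R_i$, and recover a set $P\subseteq\mathbb{N}$ of positive lower density together with sets $B_n$ of measure at least some fixed $b>0$ on which $E((R_i(n)\chi_C-\chi_C)^2)$ is small; standard estimates then give $\mu(\bigcap_p\vec T^p(n)C)\geq ab/2$ for $n\in P$. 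This base case is the principal obstacle, since the preceding theorem as stated produces $Z_{d-n-1}$ (never $Z_d$) in its conclusion; everything else is a matter of aligning our main induction parameter with the parameter of the preceding theorem.
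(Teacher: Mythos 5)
Your proposal is correct and follows essentially the same route as the paper: a main induction on $n$ and side induction on $\alpha$ quantified over all SZP bases $\mathcal{Z}$, with Lemma \ref{SZPlimit} at limits and the preceding theorem (with its index shifted by one) identifying $\mathcal{Y}^{d-n,\omega_4}_{\alpha+1}(\mathcal{Z})=Z_{d-n}(\mathcal{Z}')\cup\mathcal{Y}^{d-n+1,\omega_4}_{\omega_4}(\mathcal{Z}')$ as SZP at successor steps for $n\geq 1$. The only divergence is the $n=0$ successor case, where the paper simply cites Bergelson--Liebman's result that compact ($\mathbb{Z}^d$-)extensions preserve SZP, whereas you re-derive that fact from the quoted lemma with $\Gamma=\mathbb{Z}^d$ and $\Delta$ trivial; your derivation is valid (it is essentially the standard compact-extension argument), so this is a cosmetic rather than substantive difference.
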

\begin{proof}
  For all $\mathcal{Z}$ simultaneously, by main induction on $n$ and side induction on $\alpha$.  For all $n$, the limit case follows from Lemma \ref{SZPlimit}.

When $n=0$, the claim follows at successor stages since $\mathcal{Y}^{d,\omega_4}_{\alpha+1}(\mathcal{Z})$ is a compact extension of $\mathcal{Y}^{d,\omega_4}_{\alpha}(\mathcal{Z})$ and it is shown in \cite{bergelson96} that compact extensions preserve the SZP property.

If the claim holds for $n$ and $\mathcal{Y}^{d-n-1,\omega_4}_{\alpha}(\mathcal{Z})$ is SZP, the previous theorem implies that
\[\mathcal{Y}^{d-n-1,\omega_4}_{\alpha+1}(\mathcal{Z})=Z_{d-n-1}(\mathcal{Y}^{d-n-1,\omega_4}_\alpha(\mathcal{Z}))\cup\mathcal{Y}^{d-n,\omega_4}_{\omega_4}(\mathcal{Y}^{d-n-1,\omega_4}_\alpha(\mathcal{Z}))\]
is compact since, by main IH, $\mathcal{Y}^{d-n,\omega_4}_{\omega_4}(\mathcal{Y}^{d-n-1,\omega_4}_\alpha(\mathcal{Z}))$ is SZP.
\end{proof}

In particular, the trivial factor is SZP, so if $\mathcal{Z}$ is the trivial factor, the previous theorem implies that the entire space $\mathcal{X}=\mathcal{Y}^{0,\omega_4}_1(\mathcal{Z})$ is SZP as well.  Therefore all dynamical systems are SZP.

\bibliographystyle{plain}
\bibliography{../Bibliographies/ergodic}
\end{document}